\newtheorem{defi}{Definition}[section]
\newtheorem{thm}[defi]{Theorem}
\newtheorem{ex}[defi]{Example}
\newtheorem{rem}[defi]{Remark}
\newtheorem{lemma}[defi]{Lemma}
\newcommand{\R}{\mathbb R}
\newcommand{\s}{\mathbb{S}}
\title[Spectral ratios and gaps for Steklov eigenvalues]{Spectral ratios and gaps for Steklov eigenvalues of balls with revolution-type metrics}
\author{Jade Brisson}
\address{Institut de Math\'ematiques, Universit\'e de Neuch\^atel, Rue Emile-Argand 11, 2000 Neuch\^atel, Suisse}
\email{jade.brisson@unine.ch}
\author{Bruno Colbois}
\address{Institut de Math\'ematiques, Universit\'e de Neuch\^atel, Rue Emile-Argand 11, 2000 Neuch\^atel, Suisse}
\email{bruno.colbois@unine.ch}
\author{Katie Gittins}
\address{Department of Mathematical Sciences, Durham University,
Mathematical Sciences and Computer Science Building,
Upper Mountjoy Campus, Stockton Road,
Durham DH1 3LE,
United Kingdom.}
\email{katie.gittins@durham.ac.uk}
\subjclass[2010]{35P15, 58C40}
\keywords{Steklov eigenvalues, spectral gaps, spectral ratios}
\date{\today}
\begin{document}
\begin{abstract}
We investigate upper bounds for the spectral ratios and gaps for the Steklov eigenvalues of balls with revolution-type metrics. We do not impose conditions on the Ricci curvature or on the convexity of the boundary. We obtain optimal upper bounds for the Steklov spectral ratios in dimensions 3 and higher. In dimension 3, we also obtain optimal upper bounds for the Steklov spectral gaps. By imposing additional constraints on the metric, we obtain upper bounds for the Steklov spectral gaps in dimensions 4 and higher.
\end{abstract}
\maketitle

\section{Introduction}

The spectral ratio of the first two Dirichlet eigenvalues $\lambda_2^D/\lambda_1^D$ on Euclidean domains has received a great deal of attention since the work of Payne, Pólya and Weinberger \cite{PPW55, PPW56}. They conjectured that in 2 dimensions, the best constant for $\lambda_2^D/\lambda_1^D$ is that achieved by the disc and this conjecture was generalised to higher dimensions by Thompson \cite{T69}. Both conjectures were proven by Ashbaugh and Benguria in \cite{AB92}. In addition, it was shown by Andrews and Clutterbuck \cite{AC11}, respectively Payne and Weinberger \cite{PW60}, that among all convex domains in $\mathbb{R}^n$ of prescribed diameter, the gap between the first two Dirichlet, respectively Neumann, eigenvalues is minimised by the line segment. Results for the spectral ratio and spectral gap for the Robin eigenvalues on rectangles under various geometric constraints have been obtained by Laugesen \cite{L19}. These results lend support to conjectures in broader classes of domains and we refer the reader to \cite{L19} and references therein for further details. It is known that the spectral ratio for consecutive non-trivial Neumann eigenvalues $\lambda_{k+1}^N/\lambda_k^N$ can be arbitrarily large. For example, by taking $k$ disjoint balls in $\mathbb{R}^n$ and joining them by thin cylinders we obtain a Cheeger dumbbell whose first $k$ Neumann eigenvalues are very small but the $(k+1)$-st one is not (see, for example, \cite[Example 18]{BC17}).

In this paper, we investigate the spectral ratios of the Steklov eigenvalues. Our results also shed light on the Steklov spectral gaps. 

It is well known that the Steklov eigenvalues of a smooth, compact, connected Riemannian manifold $(M,g)$ of dimension $n\geq 2$ with boundary $\Sigma$ are the real numbers $\sigma$ for which there exists a nonzero harmonic function $u:M\rightarrow\R$ which satisfies $\partial_\nu u=\sigma u$ on the boundary $\Sigma$. Throughout this paper, $\partial_\nu$ is the outward-pointing normal derivative on $\Sigma$. 
We will denote the Steklov eigenvalues as
$$ 0=\sigma_0<\sigma_{1}\leq\sigma_{2}\leq\cdots\nearrow\infty,$$
where each eigenvalue is repeated according to its multiplicity.

In general, reminiscent of the situation for the Neumann spectral ratio mentioned above, the Steklov spectral ratio can be arbitrarily large.
For example, in \cite[Section 2.1]{GP10}, a family of bounded, simply-connected domains in $\mathbb{R}^2$, $\Omega_\epsilon$, is constructed such that the Steklov spectral ratio $\sigma_{k+1}/\sigma_{k}$ of the limiting domain as $\epsilon \to 0$ is arbitrarily large. Indeed, for $k \in \mathbb{N}$, $\Omega_\epsilon$ consists of $k+1$ overlapping discs each of unit radius such that as $\epsilon \to 0$, $\Omega_\epsilon$ degenerates to the disjoint union of $k+1$ discs, $B_1, \dots, B_{k+1}$, each of unit radius and the authors show that $\lim_{\epsilon \to 0} \sigma_{k+1}(\Omega_\epsilon) |\partial \Omega_\epsilon| = 2\pi (k+1)$. But, it is possible to show that  $\lim_{\epsilon \to 0} \sigma_{k}(\Omega_\epsilon) = 0$. In addition, the generalisation of this example to two overlapping domains in higher dimensions has been addressed in \cite[Example 3]{BGT20}.
In a similar spirit, it is also possible to obtain examples of Riemannian manifolds for which the Steklov spectral ratio can be made arbitrarily large by employing Theorem 1.1 of \cite{FS20} which asserts that for a collection $M_1, \dots, M_s$ of compact $n$-dimensional Riemannian manifolds with non-empty boundary and $\epsilon >0$, there exists a Riemannian manifold $M_\epsilon$ that is obtained by gluing $M_1, \dots, M_s$ suitably along their boundaries such that for $k = 0, 1, 2, \dots$, $\lim_{\epsilon \to 0} \sigma_k(M_\epsilon) = \sigma_k(M_1 \sqcup \dots \sqcup M_s)$. So taking $s = k+1$, we see that $\sigma_k(M_1 \sqcup \dots \sqcup M_{k+1}) = 0$ while $\sigma_{k+1}(M_1 \sqcup \dots \sqcup M_{k+1}) > 0$.
Therefore, in order to obtain bounds on the Steklov spectral ratio, it is necessary to impose additional geometric constraints. 

To that end, in this paper, we consider the case where $M= [0,R] \times \mathbb{S}^{n-1}$ and $$ g := g_h = dr^2 + h(r)^2 g_0,$$
where $g_0$ is the usual metric on the $(n-1)$-dimensional unit sphere and $h(R)=0$ so that $(M,g)$ corresponds to a metric of revolution on the ball. 
In this setting, we denote the Steklov eigenvalues counted without multiplicities as
$$ 0=\sigma_0(g_h) <\sigma_{(1)}(g_h) < \sigma_{(2)}(g_h) < \cdots\nearrow\infty.$$
%where we do not count multiplicities.}

In \cite{X21,X22}\footnote{Note that we use the convention that $r=0$ corresponds to the boundary whereas the convention in \cite{X21, X22} is that $r=R$ corresponds to the boundary which explains the differences in the statement of those results here.} the author considers this setting with the additional assumptions that $(M,g)$ has strictly convex boundary and either non-negative Ricci curvature or non-positive Ricci curvature.
In the case where the Ricci curvature of $(M,g)$ is non-negative, the author shows in \cite{X22} that
\begin{equation}\label{eqx1}
    \sigma_{(k)}(g_h) \geq k\frac{(-h'(0))}{h(0)}, \quad k \geq 0,
\end{equation}
with equality if and only if $h(r) = R - r$ or $M$ is isometric to the Euclidean ball of radius $R$.  The author also obtains a corresponding result when the Ricci curvature of $(M,g)$ is non-positive where the inequality in \eqref{eqx1} is reversed.
The case where the Ricci curvature is non-negative and $k=1$ is related to a conjecture of Escobar about a lower bound for the first non-trivial Steklov eigenvalue (see \cite{X22} and references therein). For an overview of lower bounds for the first non-trivial Steklov eigenvalue see \cite[Section 4.1]{CGGS2024}.

In \cite{X21}, the author investigates the spectral gaps and ratios of the Steklov eigenvalues.
For the case where the Ricci curvature of $(M,g)$ is non-negative, the author shows in \cite{X21} that
\begin{itemize}
    \item when $n=2$,
    \begin{equation}\label{eqx2}
        \sigma_{(k)}(g_h) = \frac{k}{h(0)}, \quad k \geq 0,
    \end{equation}
    \item when $n \geq 3$, 
    \begin{equation}\label{eqx3}
        \sigma_{(k+1)}(g_h) - \sigma_{(k)}(g_h) \geq \frac{-h'(0)}{h(0)}, \quad k \geq 0,
    \end{equation}
    and
    \begin{equation}\label{eqx4}
        \frac{\sigma_{(k+1)}(g_h)}{\sigma_{(k)}(g_h)} \leq \frac{k+1}{k}, \quad k \geq 1,
    \end{equation}
\end{itemize}
with equality in \eqref{eqx3} or \eqref{eqx4} if and only if $h(r) = R - r$ or $M$ is isometric to the Euclidean ball of radius $R$. The author also obtains corresponding results when the Ricci curvature of $(M,g)$ is non-positive where the inequalities in \eqref{eqx3}, \eqref{eqx4} are reversed.

In this paper, we obtain optimal upper bounds for the Steklov spectral ratios
\begin{equation*}
    \frac{\sigma_{(k+1)}(g_h)}{\sigma_{(k)}(g_h)}
\end{equation*}
when $n \geq 3$ and the Steklov spectral gaps
\begin{equation*}
    \sigma_{(k+1)}(g_h) - \sigma_{(k)}(g_h)
\end{equation*}
when $n=3$ without any assumptions on the curvature of $(M,g)$ or any convexity assumptions on the boundary. 
By imposing additional assumptions on the metric $h$, we also obtain upper bounds for the Steklov spectral gaps when $n \geq 4$.

Throughout, analogously to \cite{X21, X22}, we impose the following constraints on $h$ which ensure that the metric is smooth.
\begin{itemize}
    \item [(H)] $h \in C^{\infty}([0,R])$, $h(r) > 0$ for $r \in [0,R)$, $h'(R) = -1$ and $h^{(2k)}(R) = 0$ for all $k \in \mathbb{Z}, k \geq 0$.
\end{itemize}
In addition, we denote the eigenvalues of the Laplacian on the $(n-1)$-dimensional unit sphere $\mathbb{S}^{n-1}$ with usual metric $g_0$ counted without multiplicity by $\lambda_{(k)}$. As in \cite{X21,X22}, we note that $\sigma_{(k)}$ has the same multiplicity as $\lambda_{(k)}$.
Our main results are the following. 

\begin{thm}\label{thm1}
    Let $M= [0,R] \times \mathbb{S}^{n-1}$ be equipped with the metric $g_h = dr^2 + h(r)^2 g_0,$ where $g_0$ is the usual metric on the $(n-1)$-dimensional unit sphere and $h$ satisfies assumptions $(H)$.
    For $k \geq 1$ we have that
    \begin{itemize}
        \item when $n \geq 3$,
        \begin{equation}\label{eqthm1}
        \frac{\sigma_{(k+1)}(g_h)}{\sigma_{(k)}(g_h)} < \frac{\lambda_{(k+1)}}{\lambda_{(k)}}
        = \frac{(k+1)}{k}\frac{(n+k-1)}{(n+k-2)},
    \end{equation}
        \item when $n=2$,
        $$\sigma_{(k)}(g_h) =\frac{k}{h(0)}.$$
    \end{itemize}
\end{thm}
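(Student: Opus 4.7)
The plan is to exploit the separation of variables afforded by the revolution structure of $(M,g_h)$. Every Steklov eigenfunction is a linear combination of products $u(r,\theta)=f(r)\,Y(\theta)$ with $Y$ a spherical harmonic on $\mathbb{S}^{n-1}$ corresponding to the Laplacian eigenvalue $\lambda_{(k)}=k(k+n-2)$, while the radial factor $f$ satisfies
\begin{equation*}
(h^{n-1}f')'-\lambda_{(k)}h^{n-3}f=0
\end{equation*}
on $[0,R]$, together with the boundary condition $-f'(0)=\sigma f(0)$ and a regularity condition at the pole $r=R$. Since $\sigma_{(k)}$ has the same multiplicity as $\lambda_{(k)}$ (as recalled in the excerpt), $\sigma_{(k)}$ is the smallest Steklov eigenvalue produced by mode $k$.

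For $n=2$, I would first perform the change of variable $s=\int_{0}^{r}dt/h(t)$, which maps $[0,R)$ diffeomorphically onto $[0,\infty)$ because $h(t)\sim R-t$ near $r=R$. The radial ODE with $\lambda_{(k)}=k^{2}$ becomes $\phi''(s)=k^{2}\phi(s)$ for $\phi(s)=f(r)$, and boundedness as $s\to\infty$ selects $\phi(s)=Be^{-ks}$. The Steklov condition at $s=0$ then produces $\sigma=k/h(0)$, independently of the shape of $h$.

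For $n\geq 3$, the heart of the argument is the variational characterization
\begin{equation*}
\sigma_{(k)}=\min_{f}\frac{\int_{0}^{R}\bigl[(f')^{2}h^{n-1}+\lambda_{(k)}f^{2}h^{n-3}\bigr]\,dr}{f(0)^{2}\,h(0)^{n-1}},
\end{equation*}
obtained by restricting the usual Steklov Rayleigh quotient to test functions of the form $u=f(r)\,Y_{(k)}(\theta)$. Let $f_{k}$ be a minimizer (necessarily non-vanishing at $r=0$, since $f_{k}(0)=0$ would force $f_{k}'(0)=0$ via the Steklov condition and hence $f_{k}\equiv 0$ by ODE uniqueness). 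The decisive step is to use $f_{k}\cdot Y_{(k+1)}$ as a test function for $\sigma_{(k+1)}$, yielding
\begin{equation*}
\sigma_{(k+1)}\,f_{k}(0)^{2}h(0)^{n-1}\leq \int_{0}^{R}(f_{k}')^{2}h^{n-1}\,dr+\lambda_{(k+1)}\int_{0}^{R}f_{k}^{2}h^{n-3}\,dr,
\end{equation*}
while the Rayleigh identity for $f_{k}$ gives the same expression with equality and $\lambda_{(k)}$ in place of $\lambda_{(k+1)}$. Multiplying the inequality by $\lambda_{(k)}$, the identity by $\lambda_{(k+1)}$, and subtracting yields
\begin{equation*}
\bigl(\sigma_{(k)}\lambda_{(k+1)}-\sigma_{(k+1)}\lambda_{(k)}\bigr)\,f_{k}(0)^{2}h(0)^{n-1}\geq(\lambda_{(k+1)}-\lambda_{(k)})\int_{0}^{R}(f_{k}')^{2}h^{n-1}\,dr,
\end{equation*}
which immediately produces $\sigma_{(k+1)}/\sigma_{(k)}\leq\lambda_{(k+1)}/\lambda_{(k)}$, with strict inequality because $\lambda_{(k+1)}>\lambda_{(k)}$ and $f_{k}$ cannot be constant for $k\geq 1$ (the radial ODE with $\lambda_{(k)}>0$ admits no nonzero constant solution, so $\int(f_{k}')^{2}h^{n-1}\,dr>0$). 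Substituting $\lambda_{(k)}=k(k+n-2)$ gives the explicit ratio $(k+1)(n+k-1)/[k(n+k-2)]$. The step requiring the most care is the variational setup itself; once the Rayleigh characterization of $\sigma_{(k)}$ within mode $k$ is established and $f_{k}(0)\neq 0$ is verified, the ratio bound emerges almost for free from nonnegativity of the radial kinetic-energy term.
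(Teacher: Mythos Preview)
Your argument is correct and, for $n\geq 3$, is essentially the paper's proof: both use the radial part $f_k$ of a $\sigma_{(k)}$-eigenfunction as a test function for $\sigma_{(k+1)}$, and both conclude strictness from the fact that $f_k$ cannot be constant. The only cosmetic difference is in the algebra---the paper splits the Rayleigh quotient as $\sigma_{(k)}+\frac{\lambda_{(k+1)}-\lambda_{(k)}}{\lambda_{(k)}}\cdot\frac{\lambda_{(k)}\int f_k^2 h^{n-3}}{f_k(0)^2 h(0)^{n-1}}$ and then bounds the second term by $\frac{\lambda_{(k+1)}-\lambda_{(k)}}{\lambda_{(k)}}\sigma_{(k)}$, whereas you take an appropriate linear combination of the inequality and the identity---but the content is identical.

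For $n=2$ you take a genuinely different route. The paper argues via conformal invariance: after a homothety of ratio $1/h(0)$, the disc $(D,g_h)$ is conformal to a disc with boundary length $2\pi$, which in turn is conformal to the Euclidean unit disc with conformal factor equal to $1$ on the boundary, so the Steklov spectrum coincides with that of the unit disc. Your direct ODE computation via the substitution $s=\int_0^r dt/h(t)$ is more hands-on and avoids invoking conformal invariance of the Steklov problem; it is self-contained once one observes that the regularity condition at the pole (equivalently $f(R)=0$, the boundary condition stated in the paper's equation for the radial ODE) forces the decaying exponential. Both approaches are short; yours is perhaps more elementary, while the paper's makes the conformal-invariance mechanism behind the dimension-two rigidity explicit.
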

\noindent
Moreover, the upper bound \eqref{eqthm1} is optimal.
\begin{thm}\label{thm2}
Let $n \geq 3$. Let $M= [0,R] \times \mathbb{S}^{n-1}$ be equipped with the metric $g_h = dr^2 + h(r)^2 g_0,$ where $g_0$ is the usual metric on the $(n-1)$-dimensional unit sphere and $h$ satisfies assumptions $(H)$.
For $k \geq 1$ we have that
\begin{equation}
    \sup_h \frac{\sigma_{(k+1)}(g_h)}{\sigma_{(k)}(g_h)} = \frac{\lambda_{(k+1)}}{\lambda_{(k)}}
\end{equation}
where the supremum is taken over all $h$ satisfying $(H)$.
\end{thm}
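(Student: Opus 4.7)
The upper bound $\sigma_{(k+1)}(g_h)/\sigma_{(k)}(g_h) < \lambda_{(k+1)}/\lambda_{(k)}$ comes from Theorem~\ref{thm1}, so the task is to exhibit a sequence $h_L\in(H)$ saturating it in the limit. The guiding observation is that the singular cone profile $h^\ast_L(r) = L(1-r)$ on $[0,1]$---which is excluded by $(H)$ because $(h^\ast_L)'(1) = -L$---is exactly solvable. Indeed, dividing the separated equation $(h^{n-1}f')' = \lambda h^{n-3} f$ by $L^{n-1}$ turns it into Euler's equation with effective sphere eigenvalue $\lambda/L^2$, whose regular solution at $r=1$ is $(1-r)^{s}$ with $s(s+n-2) = \lambda/L^2$. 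Setting $\nu(\mu) := \bigl(-(n-2) + \sqrt{(n-2)^2 + 4\mu}\bigr)/2$ gives the formal Steklov eigenvalue $\sigma^\ast(\lambda;L) = \nu(\lambda/L^2)$, and $\nu(\mu) = \mu/(n-2) + O(\mu^2)$ as $\mu\to 0$. Consequently $\sigma^\ast(\lambda_{(k+1)};L)/\sigma^\ast(\lambda_{(k)};L) \to \lambda_{(k+1)}/\lambda_{(k)}$ as $L\to\infty$, and the job is to realise this as a genuine $(H)$-limit.

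I would graft a short Euclidean cap onto the cone. With $R_L := 1 + (L-1)L^{-2}$ and $r_0 := 1 - L^{-2}$, first consider the piecewise Lipschitz profile $\tilde h_L(r) = L(1-r)$ on $[0, r_0]$ and $\tilde h_L(r) = R_L - r$ on $[r_0, R_L]$, which is continuous at $r_0$ (both sides equal $L^{-1}$) but has mismatched slopes $-L$ and $-1$. Let $h_L\in(H)$ be obtained by mollifying $\tilde h_L$ in a thin interval $[r_0-\eta_L, r_0+\eta_L]$ with $\eta_L \ll L^{-2}$, leaving the Euclidean cap near $R_L$ untouched so the boundary conditions at $R_L$ are preserved. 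On the cone piece the ODE solutions are $A(1-r)^{s} + B(1-r)^{-(n-2+s)}$ with $s = \nu(\lambda/L^2)$; on the cap piece the solution regular at $R_L$ is $C(R_L-r)^{\nu(\lambda)}$. Matching $f$ and $f'$ across the transition and using $1-r_0 = L^{-2}$, $R_L-r_0 = L^{-1}$ should lead, after tracking powers of $L$, to $B/A = O(L^{-(2n-3)})$, so the singular cone mode is suppressed. Reading off $\sigma = -f'(0)/f(0) = \bigl(As - B(n-2+s)\bigr)/(A+B)$ then yields
\[
\sigma(\lambda; g_{h_L}) = \nu(\lambda/L^2) + O\bigl(L^{-(2n-3)}\bigr) = \frac{\lambda}{(n-2)L^2}\bigl(1 + o(1)\bigr),
\]
and since both this correction and the Taylor correction $O(L^{-4})$ in $\nu$ are $o(L^{-2})$ for $n \geq 3$, the spectral ratio $\sigma_{(k+1)}(g_{h_L})/\sigma_{(k)}(g_{h_L})$ tends to $\lambda_{(k+1)}/\lambda_{(k)}$.

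The main difficulty is quantitative: the leading term of $\sigma$ is $O(L^{-2})$, so the perturbation caused by replacing $\tilde h_L$ with the smoothed $h_L$ must be shown to be $o(L^{-2})$ rather than merely $o(1)$, which the softer qualitative continuous dependence of the Steklov spectrum is too weak to guarantee. I would handle this either by an explicit ODE comparison on the transition interval $[r_0-\eta_L, r_0+\eta_L]$---whose length can be made arbitrarily small for each fixed $L$, so that the corresponding perturbation of $f$ and $f'$ is controllable---or by a quantitative perturbation estimate for the Dirichlet-to-Neumann map. A useful companion bound, free of this subtlety, is the Rayleigh estimate $\sigma(\lambda;g_{h_L}) \leq A_L\lambda$ obtained from the constant test function $f \equiv 1$, where $A_L = h_L(0)^{1-n}\int_0^{R_L} h_L^{n-3}\,dr \to 1/((n-2)L^2)$; this matches the leading asymptotic from above and could be combined with a matching-derived lower bound to complete the proof cleanly.
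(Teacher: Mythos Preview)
Your approach is workable but genuinely different from the paper's. The paper avoids ODE matching entirely: it takes a smooth profile $h_\epsilon$ that equals $1$ near the boundary, jumps up to a large constant $\epsilon^{-1/2(n-3)}$ (or $\epsilon^{-1/2}$ when $n=3$) on most of $[0,R]$, and closes as $R-r$ near the centre. It then bounds $\sigma_{(k)}(g_{h_\epsilon})$ from above with the nearly-constant test function $\tilde a$ (exactly your ``companion bound''), and from below by showing, via Lemma~\ref{lem:fundcs} applied to the true eigenfunction, that $a$ must be $1+o(1)$ on the plateau; both bounds are $\lambda_{(k)}R\epsilon^{-1/2}(1+o(1))$, and the ratio follows. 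Everything happens inside the smooth class, so no mollification or perturbation estimate is needed. Your route---an explicit cone $h^\ast_L(r)=L(1-r)$ with Euler-equation eigenfunctions $(1-r)^{\nu(\lambda/L^2)}$, capped and smoothed---is more constructive and yields the eigenfunctions explicitly, but the price is the smoothing step you flag. That step is indeed fillable: for each fixed $L$ the Steklov eigenvalues of the Lipschitz profile $\tilde h_L$ are well-defined, and mollifying on $[r_0-\eta,r_0+\eta]$ perturbs the Rayleigh-quotient coefficients on a set of measure $2\eta$ where $h\asymp L^{-1}$, so $\sigma_{(k)}(g_{h_L^{(\eta)}})\to\sigma_{(k)}(g_{\tilde h_L})$ as $\eta\to 0$; a diagonal choice $\eta(L)$ then gives a smooth sequence with the desired ratio. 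Your matching computation $B/A=O(L^{-(2n-3)})$ and hence $\sigma=\nu(\lambda/L^2)+O(L^{-(2n-3)})$ checks out. In short: the paper's argument is shorter and self-contained; yours gives a clearer picture of why the supremum equals $\lambda_{(k+1)}/\lambda_{(k)}$ but requires the extra (routine) perturbation step.
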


 We prove Theorem \ref{thm2} by constructing a suitable family of metrics that are very large on a substantial part of $[0,R]$.
 However, it is surprising that this is not the only construction that ensures the Steklov spectral ratio approaches the supremum. We explore a different construction where the metrics become very small and have this property in Example \ref{eg1} for $n \geq 4$.
 These explorations shed light on some constraints that can be imposed on the metric $h$ so that the Steklov spectral ratio is not close to the supremum. More precisely, we prove the following theorem in Section \ref{ss:appsup}.

\begin{thm}\label{thm3}
    Let $n \geq 3$. Let $M= [0,R] \times \mathbb{S}^{n-1}$ be equipped with the metric $g_h = dr^2 + h(r)^2 g_0,$ where $g_0$ is the usual metric on the $(n-1)$-dimensional unit sphere and $h$ satisfies assumptions $(H)$.
    Suppose there exist $C_2 > C_1 > 0$ and $0 < R_1 < R$ such that
    \begin{equation}\label{3.10a}
        h(r) \leq C_2, \quad \mbox{ for } 0 \leq r \leq R,
    \end{equation}
    and 
    \begin{equation}\label{3.10b}
        h(r) \geq C_1, \quad \mbox{ for } 0 \leq r \leq R_1.
    \end{equation}
    Then 
    \begin{equation}\label{3.11}
        \frac{\sigma_{(k+1)}(g_h)}{\sigma_{(k)}(g_h)} \leq \frac{\lambda_{(k+1)}}{\lambda_{(k)}} - \gamma,
    \end{equation}
    with
 \begin{equation}
    \begin{split}
        \gamma &= \min\left\{\frac{1}{4R_1} \frac{C_1^{2(n-1)}}{C_2^{2(n-2)}}(\lambda_{(k+1)} - \lambda_{(k)})\left( R \lambda_{(k)}^2 + \frac{C_2^2}{R- R_1}\lambda_{(k)}\right)^{-1}, \right. \\ 
        & \left. \quad \quad \quad \frac{C_1^{4(n-2)}}{C_2^{2(2n-3)}} \frac{R_1^3}{128} (\lambda_{(k+1)} - \lambda_{(k)}) \left( R + \frac{C_2^2}{(R- R_1)\lambda_{(k)}}\right)^{-1}\right\}.
        \label{eq:gamma}
    \end{split}
    \end{equation}
\end{thm}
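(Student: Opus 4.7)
My plan is to reduce to a one-dimensional variational problem via the spherical harmonic decomposition, use the $\sigma_{(k)}$-eigenfunction as a test function for $\sigma_{(k+1)}$, and then quantitatively estimate the resulting error term using the constraints on $h$.

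Since the Steklov problem decouples by spherical harmonics, for each $k \geq 1$ one has
$$\sigma_{(k)}(g_h) = \inf_f \frac{\int_0^R \bigl[h^{n-1}(f')^2 + \lambda_{(k)} h^{n-3} f^2\bigr]\, dr}{h(0)^{n-1}\, f(0)^2},$$
where the infimum is over non-trivial radial profiles with $f(R)=0$. Let $f_k$ be the minimizer normalized by $f_k(0)=1$, so that $f_k'(0) = -\sigma_{(k)}$. Setting
$$I := \frac{\int_0^R h^{n-1}(f_k')^2\, dr}{h(0)^{n-1}}, \qquad A := \frac{\int_0^R h^{n-3} f_k^2\, dr}{h(0)^{n-1}},$$
the energy identity reads $\sigma_{(k)} = I + \lambda_{(k)} A$. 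Testing $f_k$ against the $(k+1)$-th radial Rayleigh quotient and rearranging with this identity yields
$$\frac{\sigma_{(k+1)}}{\sigma_{(k)}} \leq \frac{\lambda_{(k+1)}}{\lambda_{(k)}} - \frac{(\lambda_{(k+1)} - \lambda_{(k)})\, I}{\lambda_{(k)}\, \sigma_{(k)}},$$
so \eqref{3.11} reduces to proving $(\lambda_{(k+1)} - \lambda_{(k)})\, I/(\lambda_{(k)} \sigma_{(k)}) \geq \gamma$. To treat the denominator, I would upper bound $\sigma_{(k)}$ by plugging the piecewise-affine $\varphi \equiv 1$ on $[0, R_1]$, linearly descending to $0$ on $[R_1, R]$, into the same Rayleigh quotient. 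Using $h \leq C_2$ and $h(0) \geq C_1$ this yields
$$\sigma_{(k)} \leq \frac{C_2^{n-3}}{C_1^{n-1}}\left(R\lambda_{(k)} + \frac{C_2^2}{R-R_1}\right),$$
which accounts for the $\sigma_{(k)}$-factor appearing in both denominators of \eqref{eq:gamma}.

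The heart of the argument is a lower bound on $I$, obtained by splitting into two cases. In the \emph{dip case}, when there exists $r_0 \in [0, R_1]$ with $f_k(r_0) \leq 1/2$, the Cauchy--Schwarz inequality gives $\int_0^{r_0}(f_k')^2\,dr \geq (f_k(r_0)-1)^2/r_0 \geq 1/(4R_1)$, and combining with $h \geq C_1$ on $[0,R_1]$ and $h(0) \leq C_2$ yields $I \geq (C_1/C_2)^{n-1}/(4R_1)$; plugging this into the reduced inequality reproduces the first term of $\gamma$. In the \emph{flat case}, when $f_k \geq 1/2$ on $[0, R_1]$, I would exploit the integrated radial ODE
$$h(r)^{n-1} f_k'(r) = -\sigma_{(k)} h(0)^{n-1} + \lambda_{(k)} \int_0^r h^{n-3} f_k\, ds$$
together with $f_k \geq 1/2$ to locate a sub-interval of $[0, R_1]$ of length proportional to $R_1$ on which $|h^{n-1}f_k'|$ is quantitatively bounded below (either by its Steklov value $\sim \sigma_{(k)} h(0)^{n-1}$ near $r=0$, or by the growing contribution $\sim \lambda_{(k)} r\, C_1^{n-3}/2$ for $r$ not too small). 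Integration then yields an $I$-bound matching the second term of $\gamma$; the factor $R_1^3$ arises from integrating the square of a linearly growing lower bound on $|f_k'|$ over a sub-interval of length $\sim R_1$.

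The main obstacle I expect is the flat case: here $f_k$ is nearly constant so $f_k'$ can be small far from $r=0$, and the lower bound on $I$ must be extracted from the Steklov boundary data propagated through the ODE while simultaneously controlling $\|f_k\|_{L^\infty}$ via the energy identity. Balancing these competing estimates --- one well suited to the regime where $\sigma_{(k)}$ is small (the dip case), the other to the regime where $\sigma_{(k)}$ is bounded below through $A$ (the flat case) --- is precisely what produces the minimum structure appearing in the definition \eqref{eq:gamma} of $\gamma$.
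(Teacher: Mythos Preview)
Your reduction to bounding $I = h(0)^{1-n}\int_0^R h^{n-1}(f_k')^2\,dr$ from below is exactly right, and your dip case reproduces the first term of $\gamma$ verbatim. Your route, however, diverges from the paper's in the flat case. The paper does \emph{not} directly lower bound $I$; it argues by contradiction: assuming $\gamma$ small forces $I$ small (Lemma~\ref{lem1.3p1}), hence $\tfrac12\le a_k\le\tfrac32$ on $[0,R_1]$ (Lemma~\ref{lem1.3p2}), and then an explicit perturbation $a(r)=a_k(r)-\delta r$ on $[0,R_1/2]$, $a(r)=a_k(r)-\delta(R_1-r)$ on $[R_1/2,R_1]$ is shown to strictly lower the Rayleigh quotient, contradicting minimality of $a_k$. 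Your ODE-based idea is more direct and in fact works cleanly: since $g:=h^{n-1}f_k'$ satisfies $g(0)=-\sigma_{(k)}h(0)^{n-1}<0$ and $g'=\lambda_{(k)}h^{n-3}f_k\ge \lambda_{(k)}C_1^{n-3}/2$ on $[0,R_1]$ in the flat case, $g$ is strictly increasing with a single zero $r^*\in[0,R_1]$ (or none, in which case set $r^*=R_1$), and $|g(r)|\ge |r-r^*|\lambda_{(k)}C_1^{n-3}/2$. Integrating $g^2/h^{n-1}$ and minimizing $\int_0^{R_1}(r-r^*)^2\,dr$ over $r^*$ gives a lower bound for $I$ that, after inserting your upper bound on $\sigma_{(k)}$, yields the second term of $\gamma$ (even with a slightly better constant and better $C_1,C_2$ exponents). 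Your verbal description --- ``either the Steklov value near $r=0$ or the growing contribution for $r$ not too small'' --- is a bit muddled (these are not two separate sub-cases to be balanced; the monotonicity of $g$ handles everything at once, and no $L^\infty$ control of $f_k$ is needed), but the underlying mechanism is sound. The perturbation argument of the paper buys an explicit, if somewhat indirect, contradiction; your ODE argument buys a shorter, constructive proof with marginally sharper constants.
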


\begin{rem}
    We observe that when $C_2 \to \infty$ or when $R_1 \to 0$, the right-hand side of \eqref{eq:gamma} tends to 0. These cases correspond to constructions that ensure that the Steklov spectral ratio approaches the supremum which we explore in Section \ref{ss:thm1.2}.
    We note that the case $R_1 \to R$ is not possible as $h(R) = 0$ (see (H)).
\end{rem}

In addition, when $n \geq 4$, for the family of metrics constructed in the proof of Theorem \ref{thm2} (respectively Example \ref{eg1})  the Steklov spectral gap is arbitrarily large (respectively small).
However, when $n = 3$, it is possible to obtain the following upper bound for the Steklov spectral gap.
\begin{thm} \label{thm:gaprevolution}
    Let $M = \lbrack0,R\rbrack\times\s^{2}$ be equipped with the metric $g_h=dt^2+h(t)^2g_{\s^{2}}$, where $g_{\s^{2}}$ is the usual metric on $\s^2$ and $h$ satisfies assumptions $(H)$. For each $k \geq 0$, we have
    \[\sigma_{(k+1)}(g_h)-\sigma_{(k)}(g_h) < \frac{R(\lambda_{(k+1)}-\lambda_{(k)})}{h(0)^2}\,.\]
    Moreover, if we fix the value of $h$ at $t=0$, the upper bound is optimal:
    \[\sup\limits_h\{\sigma_{(k+1)}(g_h)-\sigma_{(k)}(g_h):h(0)=h_0\}=\frac{R(\lambda_{(k+1)}-\lambda_{(k)})}{h_0^2}\,.\]
\end{thm}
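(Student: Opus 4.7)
The plan is to separate variables and bound $\sigma_{(k+1)}(g_h)$ by using the minimiser of the $k$-th radial Rayleigh quotient as a test function. A Steklov eigenfunction with angular part a spherical harmonic of degree $k$ has the form $u(r,\theta)=f(r)Y(\theta)$, and substituting into the Steklov problem reduces it, in dimension $n=3$, to the radial eigenvalue problem
\[\sigma_{(k)}(g_h)=\inf_{f\not\equiv 0}\frac{\int_0^R\bigl(h(r)^2 f'(r)^2+\lambda_{(k)}f(r)^2\bigr)\,dr}{h(0)^2 f(0)^2},\]
whose Euler--Lagrange equation is $(h^2 f')'=\lambda_{(k)}f$ with regularity at $r=R$. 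Denote by $f_k$ the positive minimiser, normalised so that $f_k(0)=1$.

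The key analytic input is that $f_k$ is strictly decreasing on $[0,R)$. Since $h(R)=0$ and $f_k$ is regular at $R$, integrating the ODE gives
\[h(r)^2 f_k'(r)=-\lambda_{(k)}\int_r^R f_k(s)\,ds.\]
Standard Sturm--Liouville theory ensures the ground state is sign-definite, so $f_k>0$ on $[0,R)$; hence $h^2 f_k'$ is strictly increasing from $-h(0)^2\sigma_{(k)}<0$ at $r=0$ up to $0$ at $r=R$, forcing $f_k'<0$ on $[0,R)$ and therefore $f_k(r)<1$ for every $r\in(0,R]$. In particular $\int_0^R f_k(r)^2\,dr<R$. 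Using $f_k$ as a test function for $\sigma_{(k+1)}(g_h)$ then yields
\[\sigma_{(k+1)}(g_h)-\sigma_{(k)}(g_h)\leq\frac{(\lambda_{(k+1)}-\lambda_{(k)})\int_0^R f_k(r)^2\,dr}{h(0)^2}<\frac{R(\lambda_{(k+1)}-\lambda_{(k)})}{h(0)^2}.\]

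For optimality with $h(0)=h_0$ prescribed, I would construct a family $\{h_\varepsilon\}$ satisfying (H) with $h_\varepsilon(0)=h_0$ and $h_\varepsilon\to\infty$ uniformly on compact subsets of $(0,R)$ as $\varepsilon\to 0$, matched smoothly to the required asymptotic behaviour near $r=R$ (for instance, $h_\varepsilon\equiv h_0$ on $[0,\varepsilon]$, very large on $[\varepsilon,R-\varepsilon]$, and smoothly interpolating to the normal form near $r=R$). The a priori bound $|h(r)^2 f_k'(r)|\leq\lambda_{(k)}R$ from the integrated ODE forces $|f_k'|$ to be small wherever $h$ is large, so $f_k\to 1$ uniformly outside an arbitrarily small neighbourhood of $r=R$ and $\int_0^R f_k\to R$. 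Since $\sigma_{(k)}=\lambda_{(k)}\int_0^R f_k/h_0^2$, this gives $\sigma_{(j)}(g_{h_\varepsilon})\to R\lambda_{(j)}/h_0^2$ for $j\in\{k,k{+}1\}$, so the gap converges to $R(\lambda_{(k+1)}-\lambda_{(k)})/h_0^2$. The main technical obstacle is the smooth construction of $h_\varepsilon$ near $r=R$ satisfying every condition of (H) while keeping $h_\varepsilon$ large on the bulk of $[0,R]$, together with the quantitative control needed to conclude that $f_k\to 1$ uniformly enough to extract the sharp asymptotics.
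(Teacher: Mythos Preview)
Your argument is correct for $k \geq 1$ and takes a genuinely different route from the paper. The paper proceeds modularly: it first records the test-function inequality $\sigma_{(k+1)} - \sigma_{(k)} \leq \frac{\lambda_{(k+1)} - \lambda_{(k)}}{\lambda_{(k)}}\,\sigma_{(k)}$ (obtained by plugging the $\sigma_{(k)}$-eigenfunction into the $(k+1)$-st Rayleigh quotient), and then invokes its separately proved eigenvalue bound $\sigma_{(k)}(g_h) < R\lambda_{(k)}/h(0)^2$, the latter established via a limiting family of plateau test functions and a comparison argument for strictness. You instead exploit the ODE directly: integrating $(h^2 f_k')' = \lambda_{(k)} f_k$ and using positivity of the ground state shows $f_k$ is strictly decreasing, hence $\int_0^R f_k^2 < R$, which yields the strict gap bound in one stroke without the auxiliary eigenvalue estimate. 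Your approach is more self-contained and makes finer use of the ODE structure; the paper's is more modular, and its intermediate eigenvalue bound is of independent interest. For optimality, the paper uses a telescoping-sum trick, summing the gap inequalities for $j=0,\dots,k$ and matching the total against the known limit of $\sigma_{(k+1)}(g_{h_\varepsilon})$; you instead use the dimension-3 identity $\sigma_{(k)} = \lambda_{(k)} h_0^{-2}\int_0^R f_k$ together with the pointwise control $|h^2 f_k'| \leq \lambda_{(k)} R$ to force $f_k \to 1$, which also works. The families $h_\varepsilon$ are essentially the same in both arguments.

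One small gap: your strict-monotonicity argument requires $\lambda_{(k)} > 0$, so it does not cover $k = 0$ as written (there $f_0 \equiv 1$ and $\int_0^R f_0^2 = R$, so the second inequality degenerates to equality). This is easily patched --- for instance, observe that the test-function inequality $\sigma_{(k+1)} \leq \mathcal{R}_{k+1}(f_k)$ is itself strict, since $f_k$ cannot simultaneously solve the two distinct equations $(h^2 f')' = \lambda_{(k)} f$ and $(h^2 f')' = \lambda_{(k+1)} f$ --- but you should address it explicitly.
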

In order to prove Theorem \ref{thm:gaprevolution}, we make use of the following result which is an upper bound for the Steklov eigenvalues in this setting.
\begin{thm}\label{thm:revolution}
    Let $M = \lbrack0,R\rbrack\times\s^{2}$ be equipped with the metric $g_h=dt^2+h(t)^2g_{\s^{2}}$, where $g_{\s^{2}}$ is the usual metric on $\s^2$ and $h$ satisfies assumptions $(H)$. For $k \geq 1$, we have that
    \[\sigma_{(k)}(g_h)<\frac{R\lambda_{(k)}}{h(0)^2}\,.\]
    Moreover, if we fix the value of $h$ at $t=0$, the bound is sharp. Namely, we have
    \[\sup\limits_h\{\sigma_{(k)}(g_h):h(0)=h_0\}=\frac{R\lambda_{(k)}}{h_0^2}=\frac{Lk(k+1)}{h_0^2}\,.\]
\end{thm}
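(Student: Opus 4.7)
The plan is to reduce the problem to a one-dimensional variational problem via separation of variables, then apply the constant test function $f \equiv 1$ (which is admissible precisely in dimension three).

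Following the separation of variables used in \cite{X21, X22}, Steklov eigenfunctions on $(M, g_h)$ take the form $u(t, \theta) = f(t) Y_k(\theta)$ with $Y_k$ a spherical harmonic of degree $k$. Because $\sigma_{(k)}$ has the same multiplicity as $\lambda_{(k)}$, it coincides with the minimum of the Rayleigh quotient within the $k$-th angular block,
\[
\sigma_{(k)}(g_h) \;=\; \inf_{f}\; \frac{\int_0^R \bigl( h(t)^2\, f'(t)^2 + \lambda_{(k)}\, f(t)^2 \bigr)\, dt}{h(0)^2\, f(0)^2}.
\]
Evaluating at $f \equiv 1$ gives exactly $R \lambda_{(k)} / h(0)^2$. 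The test function $u(t,\theta) = Y_k(\theta)$ is admissible ($u \in H^1(M)$) because $\int_M |\nabla u|^2\,dV_g = \lambda_{(k)}\,\|Y_k\|_{L^2(\s^{2})}^2 \int_0^R h(t)^{n-3}\,dt$, which equals $\lambda_{(k)}\, R\, \|Y_k\|^2$ when $n = 3$ (and would depend on the shape of $h$ in higher dimensions). Strictness follows because equality in the Rayleigh characterization would force $f \equiv 1$ to satisfy the Euler-Lagrange equation $(h^2 f')' = \lambda_{(k)} f$, giving $\lambda_{(k)} = 0$ and contradicting $k \geq 1$.

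For the sharpness statement, the plan is to construct a family $h_\epsilon$ satisfying $(H)$ with $h_\epsilon(0) = h_0$, such that $h_\epsilon$ is very large on a large sub-interval of $[0, R]$; this forces the minimizer to be nearly constant and the Rayleigh quotient to approach the value attained by $f \equiv 1$. Concretely, take $h_\epsilon \equiv h_0$ on $[0, \epsilon]$, $h_\epsilon \equiv M_\epsilon$ with $M_\epsilon \to \infty$ on $[2\epsilon, R - 2\epsilon]$, monotonically interpolated between, and tapering to zero near $t = R$ via a profile compatible with $(H)$. Let $f_\epsilon$ be the associated minimizer normalized by $f_\epsilon(0) = 1$. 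The upper bound gives $\int_0^R h_\epsilon^2 (f_\epsilon')^2\,dt \leq h_0^2\, \sigma_{(k)}(g_{h_\epsilon}) \leq R \lambda_{(k)}$, so $\int_{2\epsilon}^{R - 2\epsilon} (f_\epsilon')^2\,dt \leq R\lambda_{(k)}/M_\epsilon^2$ and $f_\epsilon$ is uniformly $O(1/M_\epsilon)$-close to some constant $c_\epsilon$ on this interval, while a Cauchy-Schwarz bound on $[0, 2\epsilon]$ yields $|c_\epsilon - 1| = O(\sqrt{\epsilon})$. Then $\int_0^R f_\epsilon^2\,dt \to R$, and the complementary inequality $h_0^2\, \sigma_{(k)}(g_{h_\epsilon}) \geq \lambda_{(k)} \int_0^R f_\epsilon^2\,dt$ yields $\sigma_{(k)}(g_{h_\epsilon}) \to R\lambda_{(k)}/h_0^2$.

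The hard part will be executing the sharpness construction cleanly while respecting $(H)$ (in particular, the vanishing of all even derivatives of $h$ at $R$) and controlling the contributions to the Rayleigh quotient from the two transition zones and the tapered tail near $t = R$. A secondary point is the careful justification that $\sigma_{(k)}$ coincides with the first Rayleigh quotient minimum in the $k$-th angular block; this rests on the multiplicity statement for $\sigma_{(k)}$ recalled before the main theorems.
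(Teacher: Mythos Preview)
Your proposal is correct, and for the sharpness half it follows essentially the same construction as the paper (a family $h_\epsilon$ that is large on most of $[0,R]$, combined with Cauchy--Schwarz control of the minimizer). For the upper bound and strictness, however, you take a genuinely different route. The paper never plugs in $f\equiv 1$: instead it uses the cutoff test function $\tilde a$ that equals $1$ on $[0,R-\epsilon]$ and decreases linearly to $0$ on $[R-\epsilon,R]$, obtaining $\sigma_{(k)}(g_h)\le h(0)^{-2}\bigl(\lambda_{(k)}(R-\epsilon)+(4+\lambda_{(k)})\epsilon\bigr)$ and then letting $\epsilon\to 0$; this keeps all test functions within the class $a(R)=0$ stipulated in the paper's one-dimensional variational characterization. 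Your observation that in dimension three the angular-only function $u=Y_k$ already lies in $H^1(M)$ (since $\int_0^R h^{n-3}\,dt=R<\infty$) lets you skip the limiting step entirely and read off the bound in one line; it is more elegant but depends on recognizing that the constraint $a(R)=0$ is not needed at the level of $H^1$ test functions (a point has zero $H^1$-capacity in dimension $\ge 2$). For strictness, the paper argues by monotonicity: assuming equality for some $h$, it builds $\bar h\ge h$ with strict inequality on a subinterval and $\bar h(0)=h(0)$, uses the eigenfunction $\bar a_k$ for $g_{\bar h}$ as a test function for $g_h$, and derives $\sigma_{(k)}(g_h)<\sigma_{(k)}(g_{\bar h})\le R\lambda_{(k)}/h(0)^2$, a contradiction. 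Your Euler--Lagrange argument (the constant cannot be a critical point of the Rayleigh quotient since $(h^2\cdot 0)'=0\neq\lambda_{(k)}$) is shorter and equally valid; it trades a geometric comparison for a direct first-variation check.
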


For the case where $n \geq 4$, under bounds on $h$ as in Theorem \ref{thm3}, it is also possible to obtain upper bounds for the Steklov spectral gap as given in the next theorem.

\begin{thm}\label{lem:gap}
Let $n \geq 4$ and $M= [0,R] \times \mathbb{S}^{n-1}$ be equipped with the metric $g_h = dr^2 + h(r)^2 g_0,$ where $g_0$ is the usual metric on the $(n-1)$-dimensional unit sphere and $h$ satisfies assumptions $(H)$. If $h(r) \leq C_2$ for $0 \leq r \leq R$, then
    \begin{equation}\label{eq:gap1}
        \sigma_{(k+1)}(g_h) - \sigma_{(k)}(g_h) \leq \frac{(\lambda_{(k+1)}-\lambda_{(k)}) C_2^{n-3} R}{h(0)^{n-1}}.
    \end{equation}
\end{thm}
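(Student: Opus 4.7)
The plan is to use separation of variables in each spherical harmonic sector to reduce the Steklov eigenvalue problem to a one-dimensional Sturm--Liouville problem, and then to exploit a test function argument. Writing $u(r,\theta) = f(r) Y(\theta)$ with $Y$ a spherical harmonic satisfying $\Delta_{\s^{n-1}} Y = \lambda_{(k)} Y$, the Steklov Rayleigh quotient restricted to the mode-$k$ sector becomes
\[
\mathcal{R}_k(f) := \frac{\int_0^R \bigl( h^{n-1} (f')^2 + \lambda_{(k)} h^{n-3} f^2 \bigr) \, dr}{h(0)^{n-1} f(0)^2},
\]
so that $\sigma_{(k)}(g_h) = \min_f \mathcal{R}_k(f)$, the minimum being attained by the radial part $f_k$ of the Steklov eigenfunction. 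Plugging $f_k$ into $\mathcal{R}_{k+1}$ produces the basic gap estimate
\[
\sigma_{(k+1)}(g_h) - \sigma_{(k)}(g_h) \leq \mathcal{R}_{k+1}(f_k) - \mathcal{R}_k(f_k) = \frac{(\lambda_{(k+1)} - \lambda_{(k)}) \int_0^R h^{n-3} f_k^2 \, dr}{h(0)^{n-1} f_k(0)^2},
\]
so the proof reduces to bounding the ratio on the right.

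The main technical step, which I expect to be the hardest point, is the pointwise bound $|f_k(r)| \leq f_k(0)$ on $[0, R]$. I would first argue that $f_k \geq 0$: since $\mathcal{R}_k(|f_k|) = \mathcal{R}_k(f_k)$, the function $|f_k|$ is also a minimizer, and uniqueness of the ground state in the mode-$k$ sector forces $f_k$ to be sign-definite, so after normalization we may take $f_k \geq 0$ with $f_k(0) = 1$. Setting $p(r) := h(r)^{n-1} f_k'(r)$, the radial equation $(h^{n-1} f_k')' = \lambda_{(k)} h^{n-3} f_k$ gives $p'(r) = \lambda_{(k)} h(r)^{n-3} f_k(r) \geq 0$, so $p$ is non-decreasing on $[0,R]$. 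Since $p(0) = -\sigma_{(k)} h(0)^{n-1} f_k(0) \leq 0$ and $p(R) = 0$ (from $h(R) = 0$ combined with smoothness of $f_k$ at the pole enforced by (H)), we conclude $p \leq 0$ throughout $[0, R]$. Therefore $f_k' \leq 0$ on $[0, R)$, so $f_k$ is non-increasing and $f_k(r) \leq f_k(0)$ as claimed.

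To finish, the hypothesis $h \leq C_2$ together with $n - 3 \geq 1$ yields $h^{n-3} \leq C_2^{n-3}$, so
\[
\int_0^R h^{n-3} f_k^2 \, dr \leq C_2^{n-3} \int_0^R f_k^2 \, dr \leq C_2^{n-3} R \, f_k(0)^2,
\]
and substituting back into the gap estimate produces \eqref{eq:gap1}. The delicate ingredients are the vanishing $p(R) = 0$ at the singular endpoint and the sign-definiteness of $f_k$, both of which rely essentially on the smoothness assumptions (H) and standard facts about the ground state of the mode-$k$ radial problem.
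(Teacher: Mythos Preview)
Your proof is correct, but it follows a genuinely different route from the paper's. Both arguments start from the same basic gap inequality
\[
\sigma_{(k+1)}(g_h) - \sigma_{(k)}(g_h) \leq \frac{(\lambda_{(k+1)} - \lambda_{(k)}) \int_0^R h^{n-3} a_k^2 \, dr}{h(0)^{n-1} a_k(0)^2},
\]
obtained by using the radial part $a_k$ of the $\sigma_{(k)}$-eigenfunction as a test function in the mode-$(k+1)$ Rayleigh quotient. From here the two proofs diverge. The paper coarsens this to $\sigma_{(k+1)} - \sigma_{(k)} \leq \frac{\lambda_{(k+1)} - \lambda_{(k)}}{\lambda_{(k)}}\,\sigma_{(k)}$, then bounds $\sigma_{(k)}$ from above by plugging in a nearly-constant test function $\tilde a$ (equal to $1$ on $[0,R-\epsilon]$ and linear on $[R-\epsilon,R]$), using $h\le C_2$ and the local estimate $h(r)\le 2(R-r)$ near $r=R$, and finally letting $\epsilon\to 0$. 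Your argument instead bounds the integral $\int_0^R h^{n-3} a_k^2$ directly by establishing the qualitative fact that $a_k$ is nonnegative and nonincreasing: with $p=h^{n-1}a_k'$ you use $p'\ge 0$, $p(0)\le 0$, and $p(R)=0$ to get $a_k'\le 0$, whence $a_k(r)\le a_k(0)$ and the bound follows immediately without any limiting procedure. Your approach yields the extra structural information that the radial eigenfunction is monotone, at the cost of having to justify the sign-definiteness of $a_k$ and the vanishing of $p$ at the singular endpoint $r=R$ (both of which you correctly flag as the delicate points and which do follow from (H) and standard singular Sturm--Liouville theory). The paper's approach is more elementary in that it avoids any analysis of the eigenfunction beyond the Rayleigh quotient identity, relying only on an explicit test function.
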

\noindent

\subsection*{Plan of the paper}
In Section \ref{ss:tools} we recall some tools in this geometric setting that will be used in the proofs of our results. We then prove Theorem \ref{thm1} in Section \ref{ss:thm1.1}, Theorem \ref{thm2} in Section \ref{ss:thm1.2} and Theorem \ref{thm3} in Section \ref{ss:appsup}. The proof of Theorem \ref{thm:revolution} is given in Section \ref{ss:upbound3D} and Section \ref{ss:gap} contains the proof of Theorem \ref{thm:gaprevolution} and that of Theorem \ref{lem:gap}.

\subsection*{Acknowledgements}
J.B and B.C acknowledge support of the SNSF project ‘Geometric Spectral Theory’, grant number 200021-19689.
We are grateful to Alexandre Girouard for informing us of reference \cite{BGT20}.
We are also very grateful to the referee for many helpful suggestions and corrections.

%For the purpose of open access, the authors have applied a Creative Commons Attribution (CC BY) licence to any Author Accepted Manuscript version arising.

\section{Tools in this geometric setting}\label{ss:tools}
In this section we recall some well known facts about the Steklov eigenvalue problem on manifolds with revolution-type metrics (see, for example, \cite[Proposition 11]{X21}).

If $\{\varphi_j\}_{j=0}^\infty$ is an $L^2$-orthonormal basis of eigenfunctions of the Laplacian on $\mathbb{S}^{n-1}$, i.e.
\begin{equation*}
    -\Delta \varphi_j = \lambda_j \varphi_j, \quad \lambda_j = j(n-2+j),
\end{equation*}
then the eigenfunctions of $(M,g_h)$ have the form
$ a_j \varphi_j$ where $a_j : [0,R] \to \mathbb{R}$ is a non-trivial solution of
\begin{equation}\label{eq:defa}
\begin{cases}
    \frac{1}{h^{n-1}} \frac{d}{dr}\left(h^{n-1}\frac{d}{dr}a_j\right) - \frac{\lambda_ja_j}{h^2} = 0, \quad r \in (0,R),\\
    a(R)=0.
\end{cases}
\end{equation}

We observe that the Rayleigh quotient of $a_j \varphi_j$ is
\begin{equation}\label{eq:rq}
    \mathcal{R}(a_j\varphi_j) = 
    \frac{\int_0^R \{(a_j')^2h^{n-1} + \lambda_j a_j^2h^{n-3}\} \, dr}{a_j(0)^2 h(0)^{n-1} }.
\end{equation}
Hence, if $\lambda_{(k)}$ is the $k$-th eigenvalue of $\mathbb{S}^{n-1}$ counted without multiplicity, then we have that
\begin{equation*}
    \sigma_{(k)}(g_h) = \min_{a:[0,R] \to \mathbb{R}, a(R)=0} 
    \frac{\int_0^R \{(a')^2h^{n-1} + \lambda_{(k)} a^2h^{n-3}\} \, dr}{a(0)^2 h(0)^{n-1}}.
\end{equation*}
By comparing the Rayleigh quotient \eqref{eq:rq} and the results of Theorem \ref{thm1} and Theorem \ref{thm2}, we observe that  in order to obtain these results, the term involving $a_j'$ must vanish. This observation will be key to the strategies of the proofs that follow.

The following classic result will also be useful in the arguments that follow.

\begin{lemma}\label{lem:fundcs}
    Let $a : \mathbb{R} \to \mathbb{R}$ be differentiable. Then, for $\alpha, \beta \in \mathbb{R}$, we have
    \begin{equation*}
        |a(\beta) - a(\alpha)|^2 \leq |\beta - \alpha| \int_\alpha^\beta a'(r)^2 \, dr. 
    \end{equation*}
\end{lemma}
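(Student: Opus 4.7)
The plan is to combine the fundamental theorem of calculus with the Cauchy--Schwarz inequality applied in $L^2([\alpha,\beta])$. Without loss of generality I would assume $\alpha\leq\beta$, since the statement is symmetric in the two endpoints (the right-hand side as written should be interpreted with the integral taken over the interval $[\min(\alpha,\beta),\max(\alpha,\beta)]$, or equivalently with an absolute value).

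First I would write, by the fundamental theorem of calculus,
\begin{equation*}
a(\beta)-a(\alpha)=\int_\alpha^\beta a'(r)\,dr=\int_\alpha^\beta 1\cdot a'(r)\,dr.
\end{equation*}
Then I would apply the Cauchy--Schwarz inequality to the pair of functions $1$ and $a'$ on $[\alpha,\beta]$:
\begin{equation*}
\left|\int_\alpha^\beta 1\cdot a'(r)\,dr\right|^2 \leq \left(\int_\alpha^\beta 1^2\,dr\right)\left(\int_\alpha^\beta a'(r)^2\,dr\right) = (\beta-\alpha)\int_\alpha^\beta a'(r)^2\,dr.
\end{equation*}
Combining the two displays and replacing $\beta-\alpha$ by $|\beta-\alpha|$ in the statement yields the claimed inequality.

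There is no substantive obstacle here: the argument is a two-line application of standard calculus and the $L^2$ Cauchy--Schwarz inequality. The only small point worth noting is that the hypothesis ``differentiable'' should really be strengthened to ``absolutely continuous'' (or at least $C^1$) in order to legitimise the fundamental theorem of calculus step; in the applications later in the paper the functions $a_j$ involved are smooth, so this is not an issue.
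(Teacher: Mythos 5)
Your proposal is correct and follows exactly the same route as the paper: the Fundamental Theorem of Calculus followed by the Cauchy--Schwarz inequality applied to $1\cdot a'$ on $[\alpha,\beta]$. The side remarks about interval orientation and the regularity hypothesis are reasonable but do not change the argument.
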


\begin{proof}
    By the Fundamental Theorem of Calculus, we have that
    \begin{equation*}
        |a(\beta) - a(\alpha)| = \bigg\vert \int_\alpha^\beta a'(r) \, dr\bigg\vert. 
    \end{equation*}
    Applying the Cauchy--Schwarz inequality to the right-hand side gives
    \begin{equation*}
        \bigg\vert \int_\alpha^\beta a'(r) \, dr\bigg\vert \leq \sqrt{|\beta - \alpha|}\left(\int_\alpha^\beta a'(r)^2\,dr\right)^{1/2}
    \end{equation*}
    from which we deduce the required result by squaring.
\end{proof}

\section{Proofs of main results}
In this section we give the proofs of our main results.

\subsection{Proof of Theorem \ref{thm1}}\label{ss:thm1.1}

\begin{proof}[Proof of Theorem \ref{thm1}]
    We first consider the case where $n \geq 3$. In order to find an upper bound for $$\frac{\sigma_{(k+1)}(g_h)}{\sigma_{(k)}(g_h)}, $$
    we take a function $a_k$ that gives rise to an eigenfunction for $\sigma_{(k)}(g_h)$, that is
    \begin{equation*}
    \sigma_{(k)}(g_h) = 
    \frac{\int_0^R \{(a_k')^2h^{n-1} + \lambda_{(k)} a_k^2h^{n-3}\} \, dr}{a_k(0)^2 h(0)^{n-1}},
\end{equation*}
    and use it as a test function in the Rayleigh quotient corresponding to $\sigma_{(k+1)}(g_h)$.
    We have the following
    \begin{align*}
    \sigma_{(k+1)}(g_h) \leq \mathcal{R}(a_k\varphi_{k+1}) &= 
    \frac{\int_0^R \{(a_k')^2h^{n-1} + \lambda_{(k+1)} a_k^2h^{n-3}\} \, dr}{a_k(0)^2 h(0)^{n-1}}\\
    &=\frac{\int_0^R \{(a_k')^2h^{n-1} + \lambda_{(k)} a_k^2h^{n-3}\} \, dr}{a_k(0)^2 h(0)^{n-1}}
    + \frac{\int_0^R \{\lambda_{(k+1)}-\lambda_{(k)}\} a_k^2h^{n-3} \, dr}{a_k(0)^2 h(0)^{n-1}}.
    \end{align*}
    So
    \begin{align}
        \sigma_{(k+1)}(g_h)
        & \leq \sigma_{(k)}(g_h) + \frac{\lambda_{(k+1)}-\lambda_{(k)}}{\lambda_{(k)}}\frac{\int_0^R \lambda_{(k)} a_k^2h^{n-3} \, dr}{a_k(0)^2 h(0)^{n-1}} \nonumber \\
        & \leq \sigma_{(k)}(g_h) + \frac{\lambda_{(k+1)}-\lambda_{(k)}}{\lambda_{(k)}}\sigma_{(k)}(g_h). \label{eq:gap0}
    \end{align}
    Hence we deduce that
    \begin{equation*}
        \frac{\sigma_{(k+1)}(g_h)}{\sigma_{(k)}(g_h)}
        \leq \frac{\lambda_{(k+1)}}{\lambda_{(k)}}.
    \end{equation*}
    In order to have
    \begin{equation*}
        \frac{\sigma_{(k+1)}(g_h)}{\sigma_{(k)}(g_h)}
        = \frac{\lambda_{(k+1)}}{\lambda_{(k)}},
    \end{equation*}
    we must have equality in Inequality \ref{eq:gap0}. In particular,
    \begin{equation*}
        \frac{\int_0^R \lambda_{(k)} a_k^2h^{n-3} \, dr}{a_k(0)^2 h(0)^{n-1}} = \sigma_{(k)}(g_h)
    \end{equation*}
    which implies that 
    \begin{equation*}
        \frac{\int_0^R (a_k')^2h^{n-1} \, dr}{a_k(0)^2 h(0)^{n-1}} = 0
    \end{equation*}
    and hence $a_k'(r) = 0$ for almost every $r \in [0,R]$. However, this would give that $a_k$ is a constant function which is not possible as we know $a_k(R) = 0$ but the $a_k$ are non-trivial. Alternatively, constant functions do not satisfy the ODE in \eqref{eq:defa}. Therefore we conclude that
    \begin{equation*}
        \frac{\sigma_{(k+1)}(g_h)}{\sigma_{(k)}(g_h)}
        < \frac{\lambda_{(k+1)}}{\lambda_{(k)}}.
    \end{equation*}

    Finally, we consider the case where $n=2$. If $g(r,\theta)=dr^2+h(r)^2d\theta^2$ is a Riemannian metric on the disc $D$, then the length of the boundary of $(D,g)$ is $2\pi h(0)$. Via a homothety of ratio $\frac{1}{h(0)}$, $(D,g)$ is conformal to $(D,g_0)$, with boundary of length $2\pi$. Moreover, $\sigma_{(k)}(D,g)=\frac{1}{h(0)}\sigma_k(D,g_0)$.
    Now, as in \cite[Prop. 1.10]{CoGiGi2019}, $(D,g_0)$ is conformal to the Euclidean unit disc, with a conformal factor taking the value $1$ on the boundary. This implies that the Steklov spectrum of $(D,g_0)$ is the same as the Steklov spectrum of the unit Euclidean disc and $\sigma_{(k)}(D,g)=\frac{k}{h(0)}$.
\end{proof}

\subsection{Proof of Theorem \ref{thm2}}\label{ss:thm1.2} 

The key idea of the proof of Theorem \ref{thm2} is to choose a sequence of functions $(h_\epsilon)_\epsilon$, $0 < \epsilon < 1$, such that when $\epsilon \to 0$, the supremum of 
\begin{equation*}
    \frac{\int_0^R \{(a')^2h_\epsilon^{n-1} + \lambda a^2h_\epsilon^{n-3}\} \, dr}{a(0)^2 h(0)^{n-1}}
\end{equation*}
is given by 
\begin{equation*}
    \frac{\int_0^R  \lambda a^2h_\epsilon^{n-3} \, dr}{a(0)^2 h(0)^{n-1}}.
\end{equation*}
To achieve this, we choose $h_\epsilon$ so that they become very large on a substantial part of $[0,R]$ and we show that this leads to $a$ being close to a constant. 

However, using such a family of functions $h_\epsilon$ is not the only way to approach the supremum and we explore another possible family in Example \ref{eg1} for which the functions become very small.

\begin{proof}[Proof of Theorem \ref{thm2}]
    We first prove Theorem \ref{thm2} for $n \geq 4$. 
    For $\epsilon$ sufficiently small, we consider the following function:
    \begin{equation}\label{2.1}
        \tilde{h}_\epsilon (r)
        = \begin{cases}
            1, & r \leq \epsilon,\\
            \epsilon^{-1/2(n-3)}, & 2\epsilon\leq r \leq R-2\epsilon,\\
            R-r & R-\epsilon \leq r \leq R.
        \end{cases}
    \end{equation}
    We then define $h_\epsilon : [0,R] \to \mathbb{R}$ to be the function that is smooth, increasing on $[\epsilon,2\epsilon]$, decreasing on $[R-2\epsilon,R-\epsilon]$ and equal to $\tilde{h}_\epsilon$ otherwise. We observe that $h_\epsilon$ satisfies assumptions (H). 

    For $k \geq 1$, we are interested in the following quantity
    \begin{equation*}
        \mathcal{R}_k(a) = \frac{\int_0^R \{(a')^2h_\epsilon^{n-1} + \lambda_{(k)} a^2h_\epsilon^{n-3}\} \, dr}{a(0)^2}.
    \end{equation*}
    Without loss of generality, we suppose that $a(0) =1$. We observe that taking
    \begin{equation}\label{2.2}
        \tilde{a}(r) = 
        \begin{cases}
            1, & r \leq R - \epsilon,\\
            \frac{R-r}{\epsilon}, & R-\epsilon \leq r \leq R,
        \end{cases}
    \end{equation}
    as a test function gives the following upper bound for the Rayleigh quotient
    \begin{align}
        \mathcal{R}_k(\tilde{a}) & \leq
        \int_{0}^{R-\epsilon} \lambda_{(k)} \left(\frac{1}{\epsilon}\right)^{1/2} \, dr 
         + \int_{R-\epsilon}^R \left(\frac{1}{\epsilon}\right)^{2} (R-r)^{n-1} \, dr \\
        & \quad + \int_{R-\epsilon}^R \lambda_{(k)} (R-r)^{n-3} \frac{(R-r)^2}{\epsilon^2}  \, dr \\
        & =  \int_{0}^{R-\epsilon} \lambda_{(k)} \left(\frac{1}{\epsilon}\right)^{1/2} \, dr + (1 + \lambda_{(k)}) \int_{R-\epsilon}^R \left(\frac{1}{\epsilon}\right)^{2} (R-r)^{n-1} \, dr \\
        & = \left(\lambda_{(k)} \left(\frac{1}{\epsilon}\right)^{1/2}  (R-\epsilon) + (1+\lambda_{(k)}) \frac{\epsilon^{n-2}}{n} \right) \label{2.8}.
    \end{align}
    Hence, we have that
    \begin{equation}\label{2.3}
        \sigma_{(k)}(g_{h_\epsilon}) \leq \mathcal{R}_k(\tilde{a}) \leq \left( \lambda_{(k)} \left(\frac{1}{\epsilon}\right)^{1/2}  (R-\epsilon) + (1+\lambda_{(k)}) \frac{\epsilon^{n-2}}{n} \right).
    \end{equation}

    If instead, $a$ gives rise to an eigenfunction for $\sigma_{(k)}(g_{h_\epsilon})$ then we have that
    \begin{equation}\label{2.4}
        \sigma_{(k)}(g_{h_\epsilon}) \geq \int_0^{2\epsilon} (a')^2 h_\epsilon^{n-1} \, dr 
        \geq \int_0^{2\epsilon} (a')^2 \, dr 
        \geq \frac{1}{2\epsilon} |1 - a(2\epsilon)|^2
    \end{equation}
    by Lemma \ref{lem:fundcs}.
    Hence, from Inequalities \eqref{2.3} and \eqref{2.4} we deduce that
    \begin{equation}\label{2.5}
        |1 - a(2\epsilon)|^2 \leq 2\epsilon\left(\frac{\lambda_{(k)}}{ \epsilon^{1/2}}(R-\epsilon) + (1+\lambda_{(k)})\frac{\epsilon^{n-2}}{n}\right)
    \end{equation}
    which implies that
    \begin{equation}\label{2.6}
        a(2\epsilon) = 1 + O(\epsilon^{1/4}).
    \end{equation}
    When $a$ gives rise to an eigenfunction for $\sigma_{(k)}(g_{h_\epsilon})$, we also have that
    \begin{equation}\label{2.7}
        \mathcal{R}_k(a) \geq \int_{2\epsilon}^{R-2\epsilon} \left\{(a')^2 \left(\frac{1}{\epsilon}\right)^{(n-1)/2(n-3)} +\lambda_{(k)} a^2 \left(\frac{1}{\epsilon}\right)^{1/2}\right\} \, dr.
    \end{equation}
    Then, by combining Inequality \eqref{2.7} and Inequality \eqref{2.8}, we deduce that
    \begin{align}
        \int_{2\epsilon}^{R-2\epsilon} (a')^2 \left(\frac{1}{\epsilon}\right)^{1/(n-3)} \left(\frac{1}{\epsilon}\right)^{1/2} \, dr
        &\leq \left( \lambda_{(k)} \left(\frac{1}{\epsilon}\right)^{1/2}  (R-\epsilon) + (1+\lambda_{(k)}) \frac{\epsilon^{n-2}}{n} \right) \\
        & = \left(\lambda_{(k)} \left(\frac{1}{\epsilon}\right)^{1/2}  (R-\epsilon) +  C\epsilon^{n-2} \right), 
    \end{align}
    where $C = \frac{(1+\lambda_{(k)})}{n}$, which implies that
    \begin{equation}
        \int_{2\epsilon}^{R-2\epsilon} (a')^2 \, dr
        \leq \epsilon^{1/(n-3)} \lambda_{(k)} (R-\epsilon) + C \epsilon^{(2n^2-9n+11)/2(n-3)}.
    \end{equation}
    By Lemma \ref{lem:fundcs}, for $2\epsilon < r < R - 2\epsilon$, we then deduce that
    \begin{equation}
        |a(r) - a(2\epsilon)|^2 \leq \epsilon^{1/(n-3)} \lambda_{(k)} (R-\epsilon)|R-4\epsilon| + C \epsilon^{(2n^2-9n+11)/2(n-3)}|R-4\epsilon|
    \end{equation}
    which implies that
    \begin{equation}
        a(r) = 1 + O(\epsilon^{1/2(n-3)}) + O(\epsilon^{1/4}).
    \end{equation}
    We therefore obtain
    \begin{equation}\label{2.9}
        \mathcal{R}_k(a) \geq \int_{2\epsilon}^{R-2\epsilon} \lambda_{(k)} (1+o(1))^2 \left(\frac{1}{\epsilon}\right)^{1/2} \, dr
        = \frac{(R-4\epsilon)\lambda_{(k)}}{\epsilon^{1/2}} +o(\epsilon^{-1/2}).
    \end{equation}
    We note that for $\ell > 0$ fixed, Inequality \eqref{2.3} and Inequality \eqref{2.9} hold for any $k \leq \ell +1$ so for all $k \leq \ell$, we deduce that
    \begin{equation}
    \frac{\sigma_{(k+1)}(g_{h_\epsilon})}{\sigma_{(k)}(g_{h_\epsilon})} \geq \frac{\lambda_{(k+1)}}{\lambda_{(k)}} + o(1),
    \end{equation}
    where we used Inequality \eqref{2.3} for $\sigma_{(k)}(g_{h\epsilon})$ in the denominator and Inequality \eqref{2.9} for $\sigma_{(k+1)}(g_{h_\epsilon})$ in the numerator.

    In the case where $n=3$, applying the same arguments as above but with the function
    \begin{equation}\label{2.1b}
        \tilde{h}_\epsilon (r)
        = \begin{cases}
            1, & r \leq \epsilon,\\
            \epsilon^{-1/2}, & 2\epsilon\leq r \leq R-2\epsilon,\\
            R-r & R-\epsilon \leq r \leq R,
        \end{cases}
    \end{equation}
    prove the result.
\end{proof}

In the following example, for $n \geq 4$, we show that the construction used in the proof of Theorem \ref{thm2} is not the only way that the Steklov spectral ratio in this setting can approach the supremum. Roughly, speaking, it is not only metrics for which $h$ is very large that achieve this, but also metrics that are very small.

\begin{ex}\label{eg1}
    Let $n \geq 4$. 
    For $\epsilon$ sufficiently small we define 
    \begin{equation}\label{eqeg1}
        \tilde{h}_\epsilon (r)
        = \begin{cases}
            1, & r \leq \epsilon,\\
            \epsilon^{2}, & \epsilon +\epsilon^2\leq r \leq R-\epsilon^2,\\
            R-r, & R-\epsilon^2 \leq r \leq R,
        \end{cases}
    \end{equation}
    and define $h_\epsilon : [0,R] \to \mathbb{R}$ to be a function that is smooth, decreasing on $[\epsilon, \epsilon + \epsilon^2]$ and equal to $\tilde{h}_\epsilon$ otherwise.
    We claim that
    \begin{equation}\label{eqegconc}
    \frac{\sigma_{(k+1)}(g_{h_\epsilon})}{\sigma_{(k)}(g_{h_\epsilon})} \to \frac{\lambda_{(k+1)}}{\lambda_{(k)}}
    \end{equation}
    as $\epsilon \to 0$.
    
    Taking
    \begin{equation}\label{eqeg2}
        \tilde{a}(r) = 
        \begin{cases}
            1, & 0 \leq r \leq R - \epsilon,\\
            \frac{R-r}{\epsilon}, & R - \epsilon \leq r \leq R,
        \end{cases}
    \end{equation}
    as a test function gives that
    \begin{equation}\label{eqeg3}
        \mathcal{R}(\tilde{a}) \leq 
        \int_0^R \lambda_{(k)} h_\epsilon(r)^{n-3} \, dr + \int_{R-\epsilon}^R \frac{h_\epsilon(r)^{n-1}}{\epsilon^2} \, dr = \lambda_{(k)} \epsilon + O(\epsilon^2).
    \end{equation}
    Hence 
    \begin{equation}\label{eqeg4}
        \sigma_{(k)}(g_{h_\epsilon}) \leq \lambda_{(k)} \epsilon + O(\epsilon^2).
    \end{equation}
    On the other hand, when $a_k$ (with $a_k(0) = 1$) gives rise to an eigenfunction for $\sigma_{(k)}(g_{h_\epsilon})$, we have by Lemma \ref{lem:fundcs} that
    \begin{equation}\label{eqeg5}
        |a_k(r) - 1| = \bigg\vert \int_0^r a_k'(r) \, dr \bigg\vert 
        \leq \left(\int_0^r (a_k')^2\right)^{1/2} r^{1/2}.
    \end{equation}
    So, for $r \leq \epsilon$, we have
    \begin{equation}\label{eqeg6}
        |a_k(r) - 1| \leq \sigma_{(k)}(g_{h_\epsilon})^{1/2}\epsilon^{1/2} \leq \lambda_{(k)}^{1/2} \epsilon + O(\epsilon^{3/2}),
    \end{equation}
    which implies that
    \begin{equation}\label{eqeg7}
        a_k(r) = 1 + O(\epsilon)
    \end{equation}
    for $0 \leq r \leq \epsilon$.
    Hence we obtain that
    \begin{equation}\label{eqeg8}
        \sigma_{(k)}(g_{h_\epsilon}) \geq \int_0^\epsilon a_k^2 \lambda_{(k)} = \lambda_{(k)} \epsilon + O(\epsilon^2).
    \end{equation}
    Therefore by \eqref{eqeg4} and \eqref{eqeg8}, we deduce \eqref{eqegconc}.
\end{ex}

\begin{rem}
    For $n \geq 4$, we remark that the construction used in the proof of Theorem \ref{thm2} also shows that in this setting the Steklov spectral gap $\sigma_{(k+1)}(g_{h_\epsilon}) - \sigma_{(k)}(g_{h_\epsilon}) \to \infty$ as $\epsilon \to 0$. Indeed, by Inequality \eqref{2.8} and Inequality \eqref{2.9} we have that
    \begin{equation*}
        \sigma_{(k+1)}(g_{h_\epsilon}) - \sigma_{(k)}(g_{h_\epsilon}) \geq (\lambda_{(k+1)} - \lambda_{(k)}) \frac{R}{\epsilon^{1/2}} \to \infty, \quad \epsilon \to 0.
    \end{equation*}
    In addition, the construction used in Example \ref{eg1} shows that the Steklov spectral gap
    $\sigma_{(k+1)}(g_{h_\epsilon}) - \sigma_{(k)}(g_{h_\epsilon}) \to 0$ as $\epsilon \to 0$.
    Thus, in order to obtain bounds for the Steklov spectral gap when $n \geq 4$, additional geometric constraints are required. See, for example, \cite{X21} and Theorem \ref{lem:gap}.
\end{rem}

\subsection{Proof of Theorem \ref{thm3}}\label{ss:appsup}
To prove Theorem \ref{thm3} we make use of several lemmas that we introduce below.
\begin{lemma}\label{lem1.3p1}
    Suppose that $a_k$ gives rise to an eigenfunction for $\sigma_{(k)}(g_h)$. If 
\begin{equation}\label{3.4}
    \frac{\sigma_{(k+1)}(g_h)}{\sigma_{(k)}(g_h)} \geq \frac{\lambda_{(k+1)}}{\lambda_{(k)}} - \gamma,
\end{equation}
for $\gamma > 0$, then
\begin{equation}\label{3.8}
    \frac{\int_0^R (a_k')^2h^{n-1} \, dr}{a_k(0)^2 h(0)^{n-1}} 
    \leq \gamma \frac{\sigma_{(k)}(g_h)\lambda_{(k)}}{\lambda_{(k+1)} - \lambda_{(k)}}.
\end{equation}
\end{lemma}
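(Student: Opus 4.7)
The plan is to revisit the argument from the proof of Theorem \ref{thm1}, but this time to keep track of the kinetic-energy term
\[
K := \frac{\int_0^R (a_k')^2 h^{n-1}\,dr}{a_k(0)^2 h(0)^{n-1}},
\]
which was simply discarded there. The whole idea is that the gap that appears in the proof of Theorem \ref{thm1} between the Rayleigh quotient of $a_k \varphi_{k+1}$ and the bound $\lambda_{(k+1)}/\lambda_{(k)}\cdot \sigma_{(k)}(g_h)$ is exactly $\frac{\lambda_{(k+1)} - \lambda_{(k)}}{\lambda_{(k)}} K$; the hypothesis \eqref{3.4} forces this gap to be at most $\gamma\,\sigma_{(k)}(g_h)$.

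First I would set $T := \frac{\int_0^R \lambda_{(k)} a_k^2 h^{n-3}\,dr}{a_k(0)^2 h(0)^{n-1}}$, so that by definition of $a_k$,
\[
    \sigma_{(k)}(g_h) = K + T.
\]
Using $a_k$ as a test function in the Rayleigh quotient associated to $\sigma_{(k+1)}(g_h)$ and splitting the numerator exactly as in the proof of Theorem \ref{thm1} gives
\[
    \sigma_{(k+1)}(g_h) \;\leq\; \sigma_{(k)}(g_h) + \frac{\lambda_{(k+1)}-\lambda_{(k)}}{\lambda_{(k)}}\, T.
\]
Substituting $T = \sigma_{(k)}(g_h) - K$ and rearranging yields
\[
    \sigma_{(k+1)}(g_h) \;\leq\; \frac{\lambda_{(k+1)}}{\lambda_{(k)}}\, \sigma_{(k)}(g_h) - \frac{\lambda_{(k+1)}-\lambda_{(k)}}{\lambda_{(k)}}\, K.
\]

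Next I would combine this upper bound with the lower bound on $\sigma_{(k+1)}(g_h)$ coming from the hypothesis \eqref{3.4}, namely
\[
    \sigma_{(k+1)}(g_h) \;\geq\; \frac{\lambda_{(k+1)}}{\lambda_{(k)}}\, \sigma_{(k)}(g_h) - \gamma\, \sigma_{(k)}(g_h).
\]
The $\lambda_{(k+1)}/\lambda_{(k)}\cdot \sigma_{(k)}(g_h)$ terms cancel, and one is left with
\[
    \frac{\lambda_{(k+1)}-\lambda_{(k)}}{\lambda_{(k)}}\, K \;\leq\; \gamma\, \sigma_{(k)}(g_h),
\]
from which \eqref{3.8} follows by multiplying through by $\lambda_{(k)}/(\lambda_{(k+1)}-\lambda_{(k)})$.

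There is no real obstacle here: the argument is a direct refinement of the proof of Theorem \ref{thm1}. The only conceptual point worth underlining is that the quantity dropped in that proof is precisely the kinetic term $K$, so Theorem \ref{thm1}'s strict inequality admits a quantitative version: the closer the ratio $\sigma_{(k+1)}(g_h)/\sigma_{(k)}(g_h)$ is to $\lambda_{(k+1)}/\lambda_{(k)}$, the smaller $K$ must be, with the proportionality constant controlled by the Laplace spectral gap $\lambda_{(k+1)}-\lambda_{(k)}$. This lemma will presumably be the engine behind Theorem \ref{thm3}, where under the constraints \eqref{3.10a}--\eqref{3.10b} one converts smallness of $K$ into quantitative near-constancy of $a_k$ on $[0,R_1]$, which in turn contradicts a lower bound on $K$ forced by the ODE \eqref{eq:defa}.
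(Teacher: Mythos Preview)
Your proof is correct and follows essentially the same approach as the paper: both introduce the potential-energy term (your $T$, the paper's $\psi$), use the identity $\sigma_{(k)}(g_h) = K + T$, combine the Rayleigh-quotient upper bound for $\sigma_{(k+1)}(g_h)$ with the hypothesis \eqref{3.4}, and solve for $K$. Your write-up is slightly more streamlined in that you substitute $T = \sigma_{(k)}(g_h) - K$ directly into the upper bound before comparing, whereas the paper first isolates a lower bound for $\psi$ and then substitutes, but the algebra is identical.
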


\begin{proof}[Proof of Lemma \ref{lem1.3p1}]
 We recall from the proof of Theorem \ref{thm1} that
    \begin{equation}\label{3.1}
        \sigma_{(k+1)}(g_h)
        \leq \sigma_{(k)}(g_h) + \frac{\lambda_{(k+1)}-\lambda_{(k)}}{\lambda_{(k)}}\frac{\int_0^R \lambda_{(k)} a_k^2h^{n-3} \, dr}{a_k(0)^2 h(0)^{n-1}}.
    \end{equation}
We denote
\begin{equation}\label{3.2}
    \frac{\int_0^R \lambda_{(k)} a_k^2h^{n-3} \, dr}{a_k(0)^2 h(0)^{n-1}} = \psi 
\end{equation}
so that 
\begin{equation}\label{3.3}
    \sigma_{(k)}(g_h) = \frac{\int_0^R (a_k')^2h^{n-1} \, dr}{a_k(0)^2 h(0)^{n-1}} + \psi.
\end{equation}
Then by Inequality \eqref{3.4} and Inequality \eqref{3.1}, we have that
\begin{equation}\label{3.5}
    \frac{\lambda_{(k+1)}}{\lambda_{(k)}}\sigma_{(k)}(g_h) - \gamma\sigma_{(k)}(g_h)
    \leq \sigma_{(k+1)}(g_h) \leq \sigma_{(k)}(g_h) + \frac{\lambda_{(k+1)}-\lambda_{(k)}}{\lambda_{(k)}} \psi
\end{equation}
which implies that
\begin{equation}\label{3.6}
    \psi \geq \sigma_{(k)}(g_h) - \gamma \frac{\sigma_{(k)}(g_h)\lambda_{(k)}}{\lambda_{(k+1)} - \lambda_{(k)}}.
\end{equation}
Hence by \eqref{3.3} we have that
\begin{equation}\label{3.7}
    \psi \geq \frac{\int_0^R (a_k')^2h^{n-1} \, dr}{a_k(0)^2 h(0)^{n-1}} + \psi - \gamma \frac{\sigma_{(k)}(g_h)\lambda_{(k)}}{\lambda_{(k+1)} - \lambda_{(k)}}
\end{equation}
which implies that
\begin{equation*}%\label{3.8}
    \frac{\int_0^R (a_k')^2h^{n-1} \, dr}{a_k(0)^2 h(0)^{n-1}} 
    \leq \gamma \frac{\sigma_{(k)}(g_h)\lambda_{(k)}}{\lambda_{(k+1)} - \lambda_{(k)}}
\end{equation*}
as required.
\end{proof}

\begin{rem}
    From Inequality \eqref{3.8} we deduce that when $\gamma$ is small,
    \begin{equation}\label{3.9}
        \frac{\int_0^R (a_k')^2h^{n-1} \, dr}{a_k(0)^2 h(0)^{n-1}} 
    \end{equation}
    must also be small.
    The construction that we employed in the proof of Theorem \ref{thm2}, respectively Example \ref{eg1}, ensures that the term in \eqref{3.9} is small by making $h$ very large, respectively small, on a substantial part of $[0,R]$ which leads to $a_k$ being close to a constant. 
\end{rem}

\begin{lemma}\label{lem1.3p2}
    Suppose that $h$ satisfies \eqref{3.10a} and \eqref{3.10b} and that
    \begin{equation*}%\label{3.4}
    \frac{\sigma_{(k+1)}(g_h)}{\sigma_{(k)}(g_h)} \geq \frac{\lambda_{(k+1)}}{\lambda_{(k)}} - \gamma,
    \end{equation*}
    for some $\gamma > 0$ where
    \begin{equation}\label{gamma1}
        \gamma \leq \frac{1}{4R_1 \rho}
    \end{equation}
    and $\rho$ is a constant depending on $C_1, C_2, R, R_1, \lambda_{(k)}, \lambda_{(k+1)}$ which will be determined below.
    Then, for each $a_k$ that gives rise to an eigenfunction for $\sigma_{(k)}(g_h)$, we have that
    \begin{equation}\label{3.18}
     \frac{1}{2} \leq a_k(r) \leq \frac{3}{2}, \quad \text{ for } 0 < r \leq R_1.
    \end{equation}    
\end{lemma}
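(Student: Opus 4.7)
The plan is to combine Lemma \ref{lem1.3p1}, a test-function upper bound on $\sigma_{(k)}(g_h)$, and Lemma \ref{lem:fundcs} in order to show that $a_k$ (normalized so that $a_k(0)=1$, which we may do since equation \eqref{eq:defa} is linear and the statement is scale-invariant) stays close to its value at $0$ on the interval $[0,R_1]$.

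First, by Lemma \ref{lem1.3p1}, the hypothesis gives
\[
\int_0^R (a_k')^2 h^{n-1}\,dr \;\leq\; \gamma\,\frac{\sigma_{(k)}(g_h)\lambda_{(k)}}{\lambda_{(k+1)}-\lambda_{(k)}}\,a_k(0)^2\,h(0)^{n-1}.
\]
Using the pointwise bounds $h(r)\geq C_1$ on $[0,R_1]$ and $h(0)\leq C_2$, I would discard the integral outside $[0,R_1]$ and bound $h(0)^{n-1}\leq C_2^{n-1}$ to obtain
\[
\int_0^{R_1}(a_k')^2\,dr \;\leq\; \gamma\,\frac{C_2^{n-1}}{C_1^{n-1}}\,\frac{\sigma_{(k)}(g_h)\lambda_{(k)}}{\lambda_{(k+1)}-\lambda_{(k)}}\,a_k(0)^2.
\]

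The next step is to control $\sigma_{(k)}(g_h)$ from above. I would use the test function
\[
\tilde{a}(r) \;=\; \begin{cases} 1, & 0\leq r\leq R_1,\\[2pt] \dfrac{R-r}{R-R_1}, & R_1\leq r\leq R,\end{cases}
\]
in the Rayleigh quotient \eqref{eq:rq}. Since $\tilde{a}'$ is supported on $[R_1,R]$ and $h\leq C_2$ there, while $\tilde{a}^2\leq 1$ and $h(0)\geq C_1$, a direct computation yields
\[
\sigma_{(k)}(g_h) \;\leq\; \mathcal{R}(\tilde{a}\varphi_k) \;\leq\; \frac{1}{C_1^{n-1}}\left(\frac{C_2^{n-1}}{R-R_1}+\lambda_{(k)}C_2^{n-3}R\right).
\]
Substituting this into the previous inequality and factoring $C_2^{2(n-2)}$ out of the bracket (using $C_2^{2(n-1)}=C_2^{2(n-2)}\cdot C_2^{2}$ and $C_2^{2n-4}=C_2^{2(n-2)}$), I would obtain
\[
\int_0^{R_1}(a_k')^2\,dr \;\leq\; \gamma\,\frac{C_2^{2(n-2)}}{C_1^{2(n-1)}(\lambda_{(k+1)}-\lambda_{(k)})}\left(R\lambda_{(k)}^2+\frac{C_2^2}{R-R_1}\lambda_{(k)}\right)a_k(0)^2,
\]
which pins down the constant $\rho$ that implicitly appears in \eqref{gamma1}, namely
\[
\rho \;=\; \frac{C_2^{2(n-2)}}{C_1^{2(n-1)}(\lambda_{(k+1)}-\lambda_{(k)})}\left(R\lambda_{(k)}^2+\frac{C_2^2}{R-R_1}\lambda_{(k)}\right).
\]

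Finally, for $0<r\leq R_1$, Lemma \ref{lem:fundcs} gives $|a_k(r)-a_k(0)|^2\leq r\int_0^r(a_k')^2\,ds\leq R_1\int_0^{R_1}(a_k')^2\,ds\leq \gamma R_1\rho\,a_k(0)^2$. Under the standing assumption $\gamma\leq 1/(4R_1\rho)$, this bound becomes $|a_k(r)-a_k(0)|\leq \tfrac{1}{2}|a_k(0)|$, which, after normalizing $a_k(0)=1$, yields precisely $\tfrac{1}{2}\leq a_k(r)\leq \tfrac{3}{2}$ on $(0,R_1]$. The main technical obstacle is the careful bookkeeping of the exponents of $C_1$ and $C_2$ in the test-function estimate of $\sigma_{(k)}$, so that the two $C_2^{n-1}/C_1^{n-1}$ factors combine correctly into the $C_2^{2(n-2)}/C_1^{2(n-1)}$ appearing in $\rho$; everything else is a direct chain of elementary inequalities.
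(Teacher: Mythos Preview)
Your proposal is correct and follows essentially the same route as the paper: apply Lemma~\ref{lem1.3p1} together with the pointwise bounds on $h$ to control $\int_0^{R_1}(a_k')^2\,dr$, bound $\sigma_{(k)}(g_h)$ above using the very same piecewise-linear test function $\tilde{a}$, define $\rho$ exactly as you do, and then conclude via Lemma~\ref{lem:fundcs}. The only cosmetic difference is that the paper normalizes $a_k(0)=1$ at the outset rather than carrying the factor $a_k(0)^2$ through the estimates.
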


\begin{proof}[Proof of Lemma \ref{lem1.3p2}]
    Without loss of generality, we suppose that $a_k(0) = 1$.
    From Inequality \eqref{3.8} and the hypotheses \eqref{3.10a} and \eqref{3.10b} on $h$, we have that
    \begin{equation}\label{3.13}
    \frac{C_1^{n-1} \int_0^{R_1} (a_k')^2 \, dr}{C_2^{n-1}} \leq \frac{\int_0^R (a_k')^2h^{n-1} \, dr}{h(0)^{n-1}} 
    \leq \gamma \frac{\sigma_{(k)}(g_h)\lambda_{(k)}}{\lambda_{(k+1)} - \lambda_{(k)}},
    \end{equation}
    which implies that
    \begin{equation}\label{3.14}
        \int_0^{R_1} (a_k')^2 \, dr \leq \gamma \frac{C_2^{(n-1)}}{C_1^{(n-1)}} \frac{\sigma_{(k)}(g_h)\lambda_{(k)}}{\lambda_{(k+1)} - \lambda_{(k)}}.
    \end{equation}
    We wish to obtain an upper bound independent of $\sigma_{(k)}(g_h)$ so we take 
    \begin{equation*}
        \tilde{a}(r) = 
        \begin{cases}
            1, & 0 \leq r \leq R_1,\\
            \frac{R-r}{R-R_1}, & R_1 \leq r \leq R.
        \end{cases}
    \end{equation*}
    as a test function for $\sigma_{(k)}(g_h)$ to obtain
    \begin{align}\label{3.15}
        \sigma_{(k)}(g_h) \leq \mathcal{R}(\tilde{a})
        &= \frac{\lambda_{(k)}}{h^{n-1}(0)} \int_0^{R_1} h^{n-3}(r) \, dr \nonumber \\
        & \quad + \frac{1}{h^{n-1}(0)} \int_{R_1}^R \left[\left(\frac{1}{R-R_1}\right)^2 h^{n-1}(r) +  \left(\frac{R-r}{R-R_1}\right)^2h^{n-3}(r)\lambda_{(k)} \right] \, dr \nonumber \\
        &\leq \frac{C_2^{n-3}}{C_1^{n-1}}
        \left( R \lambda_{(k)} + \frac{C_2^2}{R- R_1}\right).
    \end{align}
    Hence, we have by \eqref{3.14} and \eqref{3.15} that
    \begin{align}\label{3.16}
        \int_0^{R_1} (a_k')^2 \, dr &
        \leq \gamma \frac{C_2^{2(n-2)}}{C_1^{2(n-1)}} \frac{1}{\lambda_{(k+1)} - \lambda_{(k)}}\left( R \lambda_{(k)}^2 + \frac{C_2^2}{R- R_1}\lambda_{(k)}\right) \nonumber \\
        &= \gamma \rho(C_1, C_2, R, R_1, \lambda_{(k)}, \lambda_{(k+1)}).
    \end{align}
    By Lemma \ref{lem:fundcs} we deduce that for $0 < r \leq R_1$,
    \begin{equation}\label{3.17}
        |a_k(r) - 1|^2 \leq R_1 \int_0^{R_1} (a_k')^2 \, dr \leq R_1 \gamma \rho.
    \end{equation}
    Hence if
    \begin{equation*}%\label{gamma1}
        \gamma \leq \frac{1}{4R_1 \rho},
    \end{equation*}
    then $|a_k(r) - 1|^2 \leq \frac{1}{4}$ which implies that
    \begin{equation*}%\label{3.18}
     \frac{1}{2} \leq a_k(r) \leq \frac{3}{2}
    \end{equation*}
    for $0 < r \leq R_1$ as required.
\end{proof}

We now employ Lemma \ref{lem1.3p1} and Lemma \ref{lem1.3p2} to show that when $h$ is bounded, it is not possible for the Steklov spectral ratio $\sigma_{(k+1)}(g_h)/\sigma_{(k)}(g_h)$ to approach the supremum in Theorem \ref{thm2}.

\begin{proof}[Proof of Theorem \ref{thm3}]
    The strategy of the proof of Theorem \ref{thm3} is to assume \eqref{3.4} holds and to obtain a contradiction.

    We consider the $a_k$ that gives rise to an eigenfunction for $\sigma_{(k)}(g_h)$. We show that by making a small perturbation of the $a_k$, under the assumption of \eqref{3.4} for suitable $\gamma > 0$ (to be determined below), the Rayleigh quotient corresponding to the perturbed values is smaller than that corresponding to the $a_k$. Since $a_k$ gives rise to an eigenfunction for $\sigma_{(k)}(g_h)$, this gives the desired contradiction.

    We consider the following test function which is a small perturbation of $a_k$:
    \begin{equation}\label{3.19}
        a(r) = 
        \begin{cases}
            a_k(r) - \delta r, & 0 \leq r < \frac{R_1}{2},\\
            a_k(r) - \delta(R_1 - r), & \frac{R_1}{2} \leq r \leq R_1,\\
            a_k(r), & r \geq R_1.
        \end{cases}
    \end{equation}
    The contributions to the Rayleigh quotient, $\mathcal{R}(a)$, on each interval are as follows.
    For $0 \leq r \leq \frac{R_1}{2}$,
    \begin{align}\label{3.20}
        & \frac{\int_0^{R_1/2} \{(a')^2h^{n-1} + \lambda_{(k)} a^2h^{n-3}\} \, dr}{h(0)^{n-1} } \nonumber \\
        & = \frac{\int_0^{R_1/2} \{(a_k' - \delta)^2h^{n-1} + \lambda_{(k)} (a_k -\delta r)^2h^{n-3}\} \, dr}{h(0)^{n-1} } \nonumber \\
        & = \frac{\int_0^{R_1/2} \{(a_k')^2h^{n-1} + \lambda_{(k)} (a_k)^2h^{n-3}\} \, dr}{h(0)^{n-1} } \nonumber \\
        & \quad + \frac{\delta}{h(0)^{n-1} }  \left(\delta \int_0^{R_1/2} \{h^{n-1} + r^2\lambda_{(k)} h^{n-3}\} \, dr
        - 2 \int_0^{R_1/2} (a_k' h^{n-1} + ra_k h^{n-3}\lambda_{(k)}) \, dr \right) \nonumber\\
        & =: \frac{\int_0^{R_1/2} \{(a_k')^2h^{n-1} + \lambda_{(k)} (a_k)^2h^{n-3}\} \, dr}{h(0)^{n-1}} + \frac{T_1}{h(0)^{n-1}} .
    \end{align}
    For $\frac{R_1}{2} \leq r \leq R_1$,
    \begin{align}\label{3.21}
        & \frac{\int_{R_1/2}^{R_1} \{(a')^2h^{n-1} + \lambda_{(k)} a^2h^{n-3}\} \, dr}{h(0)^{n-1}} \nonumber \\
        & = \frac{\int_{R_1/2}^{R_1} \{(a_k' 
        + \delta)^2h^{n-1} + \lambda_{(k)} (a_k -\delta(R_1 - r))^2h^{n-3}\} \, dr}{h(0)^{n-1} } \nonumber \\
        & = \frac{\int_{R_1/2}^{R_1} \{(a_k')^2h^{n-1} + \lambda_{(k)} (a_k)^2h^{n-3}\} \, dr}{h(0)^{n-1} } \nonumber \\
        & \quad + \frac{\delta}{h(0)^{n-1}}  \left(\delta \int_{R_1/2}^{R_1} \{h^{n-1} + (R_1 - r)^2\lambda_{(k)} h^{n-3}\} \, dr
        + 2 \int_{R_1/2}^{R_1} (a_k' h^{n-1} - (R_1 -r) a_k h^{n-3}\lambda_{(k)}) \, dr \right) \nonumber\\
        & = \frac{\int_{R_1/2}^{R_1} \{(a_k')^2h^{n-1} + \lambda_{(k)} (a_k)^2h^{n-3}\} \, dr}{h(0)^{n-1}} + \frac{T_2}{h(0)^{n-1}} .
    \end{align}
    So we see that
    \begin{equation}\label{3.22}
        \mathcal{R}(a) = \mathcal{R}(a_k) + \frac{T_1}{h(0)^{n-1}}  + \frac{T_2}{h(0)^{n-1}} .
    \end{equation}
    In order to show that $\mathcal{R}(a) < \mathcal{R}(a_k)$, we show that for certain $\gamma$, $T_1 < 0$ and $T_2 < 0$. We observe that both $T_1$ and $T_2$ are of the form
    \begin{equation}\label{3.23}
        \delta(\delta A - B)
    \end{equation}
    and if $A, B > 0$, then
    \begin{equation}\label{3.24}
        \delta(\delta A - B) < 0 \iff \delta < \frac{B}{A}.
    \end{equation}
    We have that
    \begin{equation}\label{3.25}
    T_1 = \delta(\delta A_1 - B_1) = 
        \delta\left(\delta \int_0^{R_1/2} \{h^{n-1} + r^2\lambda_{(k)} h^{n-3}\} \, dr
        - 2 \int_0^{R_1/2} (a_k' h^{n-1} + ra_k h^{n-3}\lambda_{(k)}) \, dr \right)
    \end{equation}
    So
    \begin{equation}\label{3.26}
        \frac{B_1}{A_1} = \frac{2 \int_0^{R_1/2} (a_k' h^{n-1} + ra_k h^{n-3}\lambda_{(k)}) \, dr }{\int_0^{R_1/2} \{h^{n-1} + r^2\lambda_{(k)} h^{n-3}\} \, dr}.
    \end{equation}
    We see immediately that $A_1 \geq 0$. We can also ensure $B_1 \geq 0$ by imposing constraints on $\gamma$ as follows.

    We observe that
    \begin{equation}\label{3.27}
        a_k' h^{n-1} + ra_k h^{n-3}\lambda_{(k)} \geq ra_k h^{n-3}\lambda_{(k)} - |a_k'| h^{n-1}.
    \end{equation}
    Now we have that
    \begin{align}\label{3.28}
        \bigg\vert \int_{0}^{R_1/2} a_k' h^{n-1} \, dr \bigg\vert &\leq
        \int_0^{R_1/2} |a_k'| h^{n-1} \, dr \nonumber \\
        & \leq C_2^{n-1} \int_0^{R_1/2} |a_k'| \, dr \nonumber \\
        & \leq C_2^{n-1} \left(\int_0^{R_1/2} |a_k'|^2 \, dr \right)^{1/2} \left(\frac{R_1}{2}\right)^{1/2} \nonumber \\
        & \leq C_2^{n-1} \left(\frac{R_1}{2}\right)^{1/2}  \gamma^{1/2} \rho^{1/2},
    \end{align}
    where we used the Cauchy--Schwarz Inequality and then Inequality \eqref{3.16}. In addition, we have that
    \begin{equation}\label{3.29}
        \int_0^{R_1/2} ra_k h^{n-3}\lambda_{(k)} \, dr
        \geq \frac{C_1^{n-3}}{2}\lambda_{(k)} \int_0^{R_1/2} r \, dr
        = \frac{C_1^{n-3}R_1^2}{16} \lambda_{(k)}
    \end{equation}
    by Inequality \eqref{3.18}.
    Hence we have that
    \begin{equation}\label{3.30}
        \int_0^{R_1/2} (a_k' h^{n-1} + ra_k h^{n-3}\lambda_{(k)}) \, dr
        \geq \frac{C_1^{n-3}R_1^2}{16} \lambda_{(k)} - C_2^{n-1} \left(\frac{R_1}{2}\right)^{1/2}  \gamma^{1/2} \rho^{1/2}. 
    \end{equation}
    The right-hand side of Inequality \eqref{3.30} is non-negative if and only if
    \begin{gather}\label{3.31}
        C_2^{n-1} \left(\frac{R_1}{2}\right)^{1/2}  \gamma^{1/2} \rho^{1/2} \leq \frac{C_1^{n-3}R_1^2}{16} \lambda_{(k)} \\ \iff
        \gamma \leq \frac{C_1^{2(n-3)}}{C_2^{2(n-1)}} \frac{R_1^3}{128} \frac{\lambda_{(k)}^2}{\rho}
        = \frac{C_1^{4(n-2)}}{C_2^{2(2n-3)}} \frac{R_1^3}{128} (\lambda_{(k+1)} - \lambda_{(k)}) \left( R + \frac{C_2^2}{(R- R_1)\lambda_{(k)}}\right) ^{-1}.
    \end{gather}
    For such values of $\gamma$ and $\delta \leq \frac{B_1}{A_1}$, we have that $T_1 < 0$. 
    
    By performing the analogous calculations for $T_2$, we obtain the same upper bound for $\gamma$ as in Inequality \eqref{3.31}. Hence, for such values of $\gamma$ and $\delta \leq \frac{B_2}{A_2}$, we have that $T_2 < 0$.

    Therefore, for 
    \begin{equation}
    \begin{split}
        \gamma &= \min\left\{\frac{1}{4R_1} \frac{C_1^{2(n-1)}}{C_2^{2(n-2)}}(\lambda_{(k+1)} - \lambda_{(k)})\left( R \lambda_{(k)}^2 + \frac{C_2^2}{R- R_1}\lambda_{(k)}\right)^{-1}, \right. \\ 
        & \left. \quad \quad \quad \frac{C_1^{4(n-2)}}{C_2^{2(2n-3)}} \frac{R_1^3}{128} (\lambda_{(k+1)} - \lambda_{(k)}) \left( R + \frac{C_2^2}{(R- R_1)\lambda_{(k)}}\right)^{-1}\right\}, \label{3.32}
    \end{split}
    \end{equation}
    and $\delta \leq \min\{\frac{B_1}{A_1}, \frac{B_2}{A_2}\}$, we have that $\mathcal{R}(a) < \mathcal{R}(a_k)$ which is a contradiction. Note that the first condition in \eqref{3.32} comes from \eqref{gamma1}.
\end{proof}

\subsection{Proof of Theorem \ref{thm:revolution}}\label{ss:upbound3D}
In this section, we prove Theorem \ref{thm:revolution}.

\begin{proof}[Proof of Theorem \ref{thm:revolution}]
    We recall that $h(R) = 0$ and $h'(R)=-1$. Thus, there exists $\rho > 0$ such that if $r \in [R-\rho, R]$, we have
    \begin{equation}\label{3.37}
        \frac{1}{2} (R-r) \leq h(r) \leq 2(R-r).
    \end{equation}
    Let $0 < \epsilon <\rho$. Similarly to the proof of Theorem \ref{thm2}, we take
    \begin{equation}\label{3.38}
        \tilde{a}(r) = 
        \begin{cases}
            1, & 0 \leq r \leq R - \epsilon,\\
            \frac{R-r}{\epsilon}, & R - \epsilon \leq r \leq R,
        \end{cases}
    \end{equation}
    as a test function and make use of the upper bound in \eqref{3.37} to obtain that
    \begin{align}
        \sigma_{(k)}(g_h) 
        &\leq \frac{1}{h(0)^2} \left( \lambda_{(k)} (R-\epsilon) + \int_{R-\epsilon}^R
        \left[ \frac{h^2}{\epsilon^2} + \lambda_{(k)} \left(\frac{R-r}{\epsilon}\right)^2\right] \, dr \right) \nonumber\\
        &\leq \frac{1}{h(0)^2} ( \lambda_{(k)} (R-\epsilon) + (4 + \lambda_{(k)})\epsilon).
    \end{align}
    Then, by taking the limit as $\epsilon \to 0$, we obtain that
    \begin{equation*}
        \sigma_{(k)}(g_h) \leq \frac{R \lambda_{(k)}}{h(0)^2}.
    \end{equation*}

    To prove that the previous inequality is strict, we assume that there exists a $h \in C^\infty([0,R])$ such that $h(R) = 0$, $h'(R) = -1$ and 
    \begin{equation*}
        \sigma_{(k)}(g_h) = \frac{R \lambda_{(k)}}{h(0)^2}
    \end{equation*}
    and obtain a contradiction.
    Given such a $h$, it is possible to construct a function $\overline{h} \in C^\infty([0,R])$ such that $\overline{h}(R) = 0$, $\overline{h}'(R) = -1$, $\overline{h}(0) = h(0)$, $\overline{h}(r) > h(r)$ for $r \in [\frac{R}{4}, \frac{R}{2}]$ and $\overline{h}(r) \geq h(r)$ for $r \in [0,R] \setminus [\frac{R}{4}, \frac{R}{2}]$. Let $\overline{a}_k$ be a function that gives rise to an eigenfunction corresponding to $\sigma_k(g_{\overline{h}})$. Then $\overline{a}_k$ is not a constant function since constant functions do not satisfy \eqref{eq:defa} for $k \geq 1$.
    Taking $\overline{a}_k$ as a test function for $\sigma_k(g_{h})$, we obtain that
    \begin{equation*}
       \sigma_k(g_{h}) \leq \frac{\int_0^R \{(\overline{a}_k')^2h^2 + \lambda_{(k)} \overline{a}_k^2\} \, dr}{\overline{a}_k(0)^2 h(0)^2}
       < \frac{\int_0^R \{(\overline{a}_k')^2\overline{h}^2 + \lambda_{(k)} \overline{a}_k^2\} \, dr}{\overline{a}_k(0)^2 \overline{h}(0)^2 } = \sigma_k(g_{\overline{h}}) \leq \frac{R \lambda_{(k)}}{h(0)^2},
    \end{equation*}
    which is a contradiction.

    To show that $\sup\{\sigma_{(k)}(g_h):h(0)=h_0\}=\frac{R\lambda_{(k)}}{h_0^2}$, we follow the same arguments as in the proof of Theorem \ref{thm2} with the function
        \begin{equation}\label{3.39}
        \tilde{h}_\epsilon (r)
        = \begin{cases}
            h_0, & r \leq \epsilon,\\
            h_0\epsilon^{-1/2}, & 2\epsilon\leq r \leq R-2\epsilon,\\
            R-r & R-\epsilon \leq r \leq R,
        \end{cases}
    \end{equation}
    to obtain, analogously to \eqref{2.9}, that
    \begin{equation*}
        \sigma_{(k)}(g_{h_\epsilon}) \geq \frac{1}{h_0^2} \int_{2\epsilon}^{R-2\epsilon} \lambda_{(k)} (1 + O(\epsilon^{1/2}))^2 \, dr = \frac{|R - 4\epsilon| \lambda_{(k)}}{h_0^2} + O(\epsilon^{1/2}).
    \end{equation*}
    Taking the limit as $\epsilon \to 0$ concludes the proof.
\end{proof}

\subsection{Upper bounds for Steklov spectral gaps}\label{ss:gap}
In this section, we prove Theorem \ref{thm:gaprevolution} and Theorem \ref{lem:gap}.
Both proofs make use of arguments from the proof of Theorem \ref{thm1}. The former also employs the upper bound from Theorem \ref{thm:revolution}, while the latter exploits the additional hypotheses that $h$ is bounded.

\begin{proof}[Proof of Theorem \ref{thm:gaprevolution}]
    As in the proof of Theorem \ref{thm1}, we take a function $a_k$ that gives rise to an eigenfunction for $\sigma_{(k)}(g_h)$ and use it as a test function in the Rayleigh quotient corresponding to $\sigma_{(k+1)}(g_h)$. By \eqref{eq:gap0} and the fact that $a_k$ is not a constant function, we have that
    \begin{equation}\label{3.35}
        \sigma_{(k+1)}(g_h)
        < \sigma_{(k)}(g_h) + \frac{\lambda_{(k+1)}-\lambda_{(k)}}{\lambda_{(k)}}\sigma_{(k)}(g_h).
    \end{equation}
    Therefore, by Theorem \ref{thm:revolution} we have that
    \begin{equation}\label{3.36}
        \sigma_{(k+1)}(g_h)
        < \sigma_{(k)}(g_h) + \frac{R(\lambda_{(k+1)}-\lambda_{(k)})}{h(0)^2},
    \end{equation}
    which implies
    \begin{equation*}
         \sigma_{(k+1)}(g_h) - \sigma_{(k)}(g_h)<\frac{R(\lambda_{(k+1)}-\lambda_{(k)})}{h(0)^2}
    \end{equation*}
    as required.

    Moreover, this upper bound is optimal. Indeed, consider the family of smooth functions $(h_\epsilon)$ constructed in the proof of Theorem \ref{thm:revolution}. By the previous inequality, for $k\geq0$, we have that
    \begin{equation*}
        \sigma_{(k+1)}(g_{h_\epsilon})=\sum\limits_{j=0}^{k}\sigma_{(k+1-j)}(g_{h_\epsilon})-\sigma_{(k-j)}(g_{h_\epsilon})\leq\sum\limits_{j=0}^k\frac{R(\lambda_{(k+1-j)}-\lambda_{(k-j)})}{h_0^2}=\frac{R\lambda_{(k+1)}}{h_0^2}.
    \end{equation*}
    By Theorem \ref{thm:revolution}, we have that $\sigma_{(k+1)}(g_{h_\epsilon})\to\frac{R\lambda_{(k+1)}}{h_0^2}$ as $\epsilon\to0$. This implies that each term in the previous sum converges, namely, for all $0\leq j\leq k$, we have that
    \begin{equation*}
        \sigma_{(k+1-j)}(g_{h_\epsilon})-\sigma_{(k-j)}(g_{h_\epsilon}) \to \frac{R(\lambda_{(k+1-j)}-\lambda_{(k-j)})}{h_0^2},
    \end{equation*}
    as $\epsilon\to0$.
\end{proof}

Finally we prove Theorem \ref{lem:gap}.

\begin{proof}[Proof of Theorem \ref{lem:gap}]
    As in the proof of Theorem \ref{thm:revolution}, we take $\tilde{a}$ as defined in \eqref{3.38} 
    as a test function and employ the upper bound in \eqref{3.37} and the bounds on $h$ given in the statement of Theorem \ref{lem:gap} to obtain that
    \begin{equation}\label{3.33}
        \sigma_{(k)}(g_h) \leq 
    \frac{\int_0^R \{(\tilde{a}')^2h^{n-1} + \lambda_{(k)} \tilde{a}^2 h^{n-3}\} \, dr}{h(0)^{n-1} }
    \leq \frac{C_2^{n-3}}{h(0)^{n-1}}
        \left( R \lambda_{(k)} + \frac{2}{3}\epsilon\right).
    \end{equation}
    Now by \eqref{eq:gap0} and \eqref{3.33}, we have that
    \begin{align}\label{3.34}
        \sigma_{(k+1)}(g_h)
        &\leq \sigma_{(k)}(g_h) + \frac{\lambda_{(k+1)}-\lambda_{(k)}}{\lambda_{(k)}}\sigma_{(k)}(g_h) \nonumber\\
        &\leq \sigma_{(k)}(g_h) + \frac{(\lambda_{(k+1)}-\lambda_{(k)}) C_2^{n-3} R}{h(0)^{n-1}} + 
        \frac{\lambda_{(k+1)}-\lambda_{(k)}}{\lambda_{(k)}}
        \frac{2C_2^{n-3} \epsilon }{3 h(0)^{n-1}}.
    \end{align}
    Then, letting $\epsilon \to 0$, we obtain \eqref{eq:gap1} as required.
\end{proof}

\bibliographystyle{plain}
\bibliography{biblio}

\begin{thebibliography}{10}

\bibitem{AC11}
Ben Andrews and Julie Clutterbuck.
\newblock Proof of the fundamental gap conjecture.
\newblock {\em Journal of the American Mathematical Society}, 24(3):899--916, 2011.

\bibitem{AB92}
Mark~S. Ashbaugh and Rafael~D. Benguria.
\newblock {A Sharp Bound for the Ratio of the First Two Eigenvalues of Dirichlet Laplacians and Extensions}.
\newblock {\em Annals of Mathematics}, 135(3):601--628, 1992.

\bibitem{BGT20}
Dorin Bucur, Alessandro Giacomini, and Paola Trebeschi.
\newblock {$L^\infty$} bounds of {S}teklov eigenfunctions and spectrum stability under domain variation.
\newblock {\em J. Differential Equations}, 269(12):11461--11491, 2020.

\bibitem{BC17}
Bruno Colbois.
\newblock The spectrum of the {L}aplacian: a geometric approach.
\newblock In {\em Geometric and computational spectral theory}, volume 700 of {\em Contemp. Math.}, pages 1--40. Amer. Math. Soc., Providence, RI, 2017.

\bibitem{CoGiGi2019}
Bruno Colbois, Alexandre Girouard, and Katie Gittins.
\newblock Steklov eigenvalues of submanifolds with prescribed boundary in {E}uclidean space.
\newblock {\em J. Geom. Anal.}, 29(2):1811--1834, 2019.

\bibitem{CGGS2024}
Bruno Colbois, Alexandre Girouard, Carolyn Gordon, and David Sher.
\newblock Some recent developments on the {S}teklov eigenvalue problem.
\newblock {\em Rev Mat Complut}, 37:1--161, 2024.

\bibitem{FS20}
Ailana Fraser and Richard Schoen.
\newblock Some results on higher eigenvalue optimization.
\newblock {\em Calc. Var. Partial Differential Equations}, 59(5):Paper No. 151, 22, 2020.

\bibitem{GP10}
Alexandre Girouard and Iosif Polterovich.
\newblock {On the Hersch-Payne-Schiffer inequalities for Steklov eigenvalues}.
\newblock {\em Funct Anal Its Appl}, 44:106 -- 117, 2010.

\bibitem{L19}
Richard~S. Laugesen.
\newblock The {R}obin {L}aplacian---{S}pectral conjectures, rectangular theorems.
\newblock {\em J. Math. Phys.}, 60(12):121507, 31, 2019.

\bibitem{PPW55}
Lawrence~E. Payne, George P\'{o}lya, and Hans~F. Weinberger.
\newblock Sur le quotient de deux fr\'{e}quences propres cons\'{e}cutives.
\newblock {\em C. R. Acad. Sci. Paris}, 241:917--919, 1955.

\bibitem{PPW56}
Lawrence~E. Payne, George P\'{o}lya, and Hans~F. Weinberger.
\newblock On the ratio of consecutive eigenvalues.
\newblock {\em J. Math. and Phys.}, 35:289--298, 1956.

\bibitem{PW60}
Lawrence~E. Payne and Hans~F. Weinberger.
\newblock An optimal {P}oincar\'{e} inequality for convex domains.
\newblock {\em Arch. Rational Mech. Anal.}, 5:286--292, 1960.

\bibitem{T69}
Colin~J. Thompson.
\newblock On the ratio of consecutive eigenvalues in {$N$}-dimensions.
\newblock {\em Studies in Appl. Math.}, 48:281--283, 1969.

\bibitem{X21}
Changwei Xiong.
\newblock Optimal estimates for {S}teklov eigenvalue gaps and ratios on warped product manifolds.
\newblock {\em Int. Math. Res. Not. IMRN}, (22):16938--16962, 2021.

\bibitem{X22}
Changwei Xiong.
\newblock On the spectra of three {S}teklov eigenvalue problems on warped product manifolds.
\newblock {\em J. Geom. Anal.}, 32(5):Paper No. 153, 35, 2022.

\end{thebibliography}

\end{document}